\documentclass[a4paper,12pt]{amsart}
\usepackage{amssymb}
\usepackage{amsmath, amsthm, amscd, amsfonts, amssymb, graphicx, color}
\usepackage{amsmath, amsthm, amscd, amsfonts, amssymb, graphicx, color}

\usepackage[cp1250]{inputenc}
\usepackage[T1]{fontenc}

\newtheorem{theorem}{Theorem}[section]

\newtheorem{proposition}[theorem]{Proposition}

\theoremstyle{definition}
\newtheorem{definition}[theorem]{Definition}

\theoremstyle{remark}

\numberwithin{equation}{section}

\begin{document}
	
	\title{Pseudo-Riesz sequences in Hilbert C*-modules}
	\author{Stefan Ivkovi\' c}
	\address{Mathematical Institute of the Serbian Academy of Sciences and Arts, Kneza Mihaila 36, Beograd
		11000, Serbia}
	\email{stefan.iv10@outlook.com}
	\maketitle

	\begin{abstract}
		Motivated by the concept of pseudo-Riesz sequences and pseudo-Riesz bases in Hilbert spaces recently introduced by Biswas and Mitkovski, in this paper we study pseudo-Riesz sequences and pseudo-Riesz bases in the standard Hilbert module over a unital C*-algebra. We prove that a Bessel sequence in the standard Hilbert C*-module is a pseudo-Riesz sequence if and only if the associated synthesis operator is upper semi-C*-Fredholm. Moreover, we introduce a new notion of pseudo-Riesz-Weyl sequences in Hilbert C*-modules and we prove that a Bessel sequence in the standard Hilbert C*-module is pseudo-Riesz-Weyl if and only if the associated synthesis operator is upper semi-C*-Weyl. We apply the obtained results in the study of perturbations of Bessel sequences in Hilbert C*-modules.
	\end{abstract}
	
		\textbf{Keywords:}  {Hilbert C*-modules, Bessel sequences, frames, Riesz bases, C*-Fredholm operators, pseudo-Riesz bases}
		
		\vspace{15pt}
		
		\begin{flushleft}
			\textbf{Mathematics Subject Classification (2010)} Primary 46L08; Secondary 46L05, 47A05, 47A53, 46B15, 47B90
		\end{flushleft}
		
	\section{Introduction and preliminaries}
	
	In this paper, we let $H_{\mathcal{A}} $ denote the standard Hilbert module over a unital C*-algebra $ \mathcal{A}.$  Moreover, for each $ n \in \mathbb{N}$ we will simply denote by by $L_{n}$ \index{$L_{n}$} the finitely generated Hilbert submodule $ \mathcal{A}^{n} ,$ or more precisely, the finitely generated Hilbert submodule of $H_{\mathcal{A}} $ consisting of those sequences $\{ x_i \}_{i=1}^\infty $ with $ x_i = 0$ whenever $ i > n .$ Further, we will denote by $ B^a(H_{\mathcal{A}} )$ the C*-algebra of all $ \mathcal{A}$-linear bounded adjointable operators on $H_{\mathcal{A}}$ equipped with the operator norm.\\
	By the symbol $\tilde{ \oplus} $ \index{$\tilde{ \oplus} $} we denote the direct sum of modules as given in \cite{MT}. Thus, if $M$ is a Hilbert $C^{*}$-module and $M_{1}, M_{2}$ are two closed submodules of $M,$ we write $M=M_{1} \tilde \oplus M_{2}$ if $M_{1} \cap M_{2}=\lbrace 0 \rbrace$ and $M_{1}+ M_{2}=M.$ If, in addition $M_{1}$  and $M_{2}$ are mutually orthogonal, then we write $M=M_{1} \oplus M_{2}.$\\
	As usual, if $M$ is a Hilbert C*-module and $M_0$ is a submodules of $M,$ we let $M_0^{\perp}$ denote the orthogonal complement of $M_0$ in $M.$
	
	Next, we set $\mathcal{K}^{*}(\mathcal{M})$ \index{$\mathcal{K}^{*}(H_{\mathcal{A}})$} to be the closure in the norm topology of the linear span of the operators $ \theta_{x,y},$ where $x,y \in H_{\mathcal{A}}$ and $ \theta_{x,y}(z)=x <y,z>$ for all $z \in H_{\mathcal{A}},$ where $ <\cdot,\cdot> $ denotes the inner product on $H_{\mathcal{A}} $. In \cite[Section 2.2]{MT} the operators $\theta_{x,y}$ are called elementary operators. The set $\mathcal{K}^{*}(H_{\mathcal{A}}) $ is a closed, two sided self-adjoint ideal in the $C^{*}$-algebra $B^{a}(H_{\mathcal{A}}),$ see \cite[Section 2.2]{MT}.
	
	\begin{definition} \label{D D05}  
		\cite{BJMA, MF} Let $F \in B^{a}(H_{\mathcal{A}}).$ We say that $F $ is an upper semi-{$\mathcal{A}$}-Fredholm operator if there exists a decomposition $$H_{\mathcal{A}} = M_{1} \tilde \oplus {N_{1}} \stackrel{F}{\longrightarrow} M_{2} \tilde \oplus N_{2}= H_{\mathcal{A}} $$ with respect to which $F$ has the matrix\\
		
		\begin{center}
			$\left\lbrack
			\begin{array}{cc}
				F_{1} & 0 \\
				0 & F_{4} \\
			\end{array}
			\right \rbrack,
			$
		\end{center}
		where $F_{1}$ is an isomorphism, $M_{1},M_{2},N_{1},N_{2}$ are closed submodules of $H_{\mathcal{A}} $ and $N_{1}$ is finitely generated. Similarly, we say that $F$ is a lower semi-{$\mathcal{A}$}-Fredholm operator if all the above conditions hold except that in this case we assume that $N_{2}$ ( and not $N_{1}$ ) is finitely generated. If both $N_{1}$ and $N_{2}$ are finitely generated, then $F$ is said to be $\mathcal{A}$-Fredholm.
	\end{definition}

As in \cite{BJMA}, we set
\begin{center}
	$\mathcal{M}\Phi_{+}(H_{\mathcal{A}})=\lbrace F \in B^{a}(H_{\mathcal{A}}) \mid F $ is upper semi-{$\mathcal{A}$}-Fredholm $\rbrace ,$	\index{$\mathcal{M}\Phi_{+}(H_{\mathcal{A}})$}
\end{center}
\begin{center}
	$\mathcal{M}\Phi_{-}(H_{\mathcal{A}})=\lbrace F \in B^{a}(H_{\mathcal{A}}) \mid F $ is lower semi-{$\mathcal{A}$}-Fredholm $\rbrace ,$	\index{$\mathcal{M}\Phi_{-}(H_{\mathcal{A}})$}
\end{center}
\begin{center}
	$\mathcal{M}\Phi(H_{\mathcal{A}})=\lbrace F \in B^{a}(H_{\mathcal{A}}) \mid F $ is $\mathcal{A}$-Fredholm operator on $H_{\mathcal{A}}\rbrace .$ \index{$\mathcal{M}\Phi(H_{\mathcal{A}})$}
\end{center}

We recall also the following definition from \cite{BJMA}.
\begin{definition} \label{D D03}  
	Let $F \in \mathcal{M}\Phi_{+} (H_{\mathcal{A}}).$ We say that $ F \in {{\mathcal{M}\Phi}_{+}^{-}}^{\prime} (H_{\mathcal{A}})$ \index{$\mathcal{M}\Phi_{+}^{- \prime} (H_{\mathcal{A}})$} if there exists a decomposition $$H_{\mathcal{A}} = M_{1} \tilde \oplus {N_{1}} \stackrel{F}{\longrightarrow} M_{2} \tilde \oplus N_{2}= H_{\mathcal{A}} $$
	with respect to which
	\begin{center}
		$F=\left\lbrack
		\begin{array}{cc}
			F_{1} & 0 \\
			0 & F_{4} \\
		\end{array}
		\right \rbrack,
		$
	\end{center}
	where $F_{1}$ is an isomorphism, $N_{1}$ is closed, finitely generated and $N_{1} \preceq N_{2} ,$ that is $N_{1}$ is isomorphic to a closed submodule of $N_{2}.$ Such operators will be called upper semi-$ \mathcal{A} $-Weyl operators throughout the paper.
	
	Further, we define $\mathcal{M}\Phi_{0}(H_{\mathcal{A}})$ \index{$\mathcal{M}\Phi_{0}(H_{\mathcal{A}})$} to be the set of all $F \in \mathcal{M}\Phi (H_{\mathcal{A}})$ for which there exists an $\mathcal{M}\Phi $-decomposition 
	$$H_{\mathcal{A}} = M_{1} \tilde \oplus {N_{1}} \stackrel{F}{\longrightarrow} M_{2} \tilde \oplus N_{2}= H_{\mathcal{A}} ,$$
	where $N_{1} \cong N_{2}.$ Such operators will be called $ \mathcal{A} $-Weyl operators throughout the paper.
\end{definition}

\begin{definition}  \label{frame-def} ({\rm cf.~\cite{FL_2000_1,FL_2000_2,FL_2002}, \cite{HJLM_2008}, \cite[Def.~1.2]{Hasannasab}}) \newline
	Let $A$ be a unital C*-algebra and $I$ be a finite or countable index subset of $\mathbb N$. A sequence $\{ e_i \}_{i \in I}$ of non-zero elements in a Hilbert $A$-module $\mathcal X$ is said to be an {\it orthogonal Hilbert basis} if $\langle e_i,e_j \rangle = 0$ for any $i \not= j$ and all elements $\langle e_i,e_i \rangle = \langle e_i,e_i \rangle^2$ are orthogonal projections of $A$. If additionally  $\langle e_i,e_i \rangle = 1_A$ for any $i \in I$ then $\{ x_i \}_{i \in I}$ is said to be an {\it orthonormal (Hilbert) basis of $\mathcal X$}.
	
	A sequence $\{ x_i \}_{i \in I}$ of elements in a Hilbert $A$-module $\mathcal X$ is said to be a {\it Bessel sequence} if there exists a real positive constant $M$ such that
	\begin{equation} \label{Bessel}
		\sum_{i \in I} \langle x,x_i \rangle \langle x_i,x \rangle \leq M \cdot \langle x,x \rangle
	\end{equation}
	for every $x \in \mathcal X$. A sequence  $\{ x_i \}_{i \in I}$ of elements in a Hilbert $A$-module $\mathcal X$ is said to be a {\it frame} if there are real positive constants $m \leq M$ such that
	\begin{equation}\label{frame}
		m \cdot \langle x,x \rangle \leq \sum_{i \in I} \langle x,x_i \rangle \langle x_i,x \rangle \leq M \cdot \langle x,x \rangle
	\end{equation}  
	for every $x \in \mathcal X$. The optimal constants $m, M$ are called {\it frame bounds}. A frame is said to be {\it tight} if $m=M$, and it is said to be {\it normalized} (or a {\it Parseval frame}) if $m=M=1$. Concerning the type of convergence of the sums in (\ref{Bessel}) and (\ref{frame}) we resort to {\it standard frames}, i.e. to C*-norm-convergence of these sums. 
	
	A sequence  $\{ x_i \}_{i \in I}$ of elements in a Hilbert $A$-module $\mathcal X$ is said to be a {\it Riesz basis of $\mathcal X$} if this sequence is a frame without zero elements and, additionally, finite or infinite sums of type $\sum_{i \in J \subseteq I} a_i x_i$ with $\{ a_i \}_{i \in J \subseteq I} \in A$ (w.r.t. any kind of suitable convergence in $\mathcal X$) equal zero if and only if every single summand $\{ a_ix_i \}_{i \in J \subseteq I}$ equals zero. A Riesz basis is {\it standard} if it is a standard frame.
	
	Let $\mathcal X$ be algebraically finitely or countably generated. A sequence $\{ x_j \}_{i \in I}$ is called a {\it modular Riesz basis for countably generated $\mathcal X$} if the $A$-linear operator of $H_A$ onto $\mathcal X$ that maps every element $e_i$ of the fixed orthonormal Hilbert basis $\{ e_i \}_{i=1}^\infty$ of $H_A$ to the corresponding sequence element $x_i$ is invertible and adjointable. 
	\end{definition}

\begin{definition}
	For a standard frame $\{ x_i \}_{i \in I}$ of a Hilbert $A$-module $\mathcal X$ there are two canonical bounded $A$-linear operators, the {\it analysis operator} or {\it frame transform} $\theta: {\mathcal X} \to H_A$, defined by the formula $\theta(x)= \sum_{i \in I} \langle x,x_i \rangle e_i$ for $x \in \mathcal M$ and a orthonormal basis $\{ e_i \}_{i \in \mathbb N}$ of $H_A$, and the {\it synthesis operator} $\sigma: H_A \to \mathcal X$ defined by $\sigma (c) = \sum_{i \in I} c_i x_i=\theta^*(c)$ for all sequences $c= \{ c_i \}_{i \in \mathbb N} \in H_A$ with respect to the same orthonormal basis $\{ e_i \}_{i \in \mathbb N}$. The combined operator $S=\sigma \circ \theta$, $S: {\mathcal X} \to {\mathcal X}$ defined by $S(x) = \sum \langle x,x_i \rangle x_i$ for $x \in \mathcal X$, is said to be the {\it frame operator}.  \newline
	A (standard) frame $\{ x_i \}_{i \in I}$ of a Hilbert $A$-module $\mathcal X$ is {\it similar} to another (standard) frame $\{ y_i \}_{i \in I}$ of $\mathcal X$ if there exists a invertible adjointable operator $T$ with the property $x_i = T(y_i)$ for any $i \in I$. Such two frames are {\it unitarily isomorphic} if $T$ is a unitary operator. \newline
	A (standard) frame $\{ y_i \}_{i \in I}$ of a Hilbert $A$-module $\mathcal X$ is {\it a dual frame} of a given (standard) frame $\{ x_i \}_{i \in I}$ of $\mathcal X$ if the reconstruction formulae $x = \sum_{i \in I} \langle x,y_i \rangle x_i = \sum_{i \in I} \langle x,x_i \rangle y_i$ are valid for any $x \in \mathcal X$, (where the sums converge in norm in the standard case). 
\end{definition}

	In the rest of the paper we will frequently use the following proposition. 
	\begin{proposition} \label{prop_sequ} {\rm (\cite[Prop.~2.8]{HJM_2009}, \cite[Lemma 3.9, Thm.~3.10]{HJLM_2008})} \newline
		Let $\{ x_i \}_{i=1}^\infty$ be a sequence of a finitely or countably generated Hilbert $A$-module $\mathcal X$ over a unital C*-algebra $A$. Then
		\begin{itemize}
			\item[(i)] $\{ x_i \}_{i=1}^\infty$ is a standard Bessel sequence with Bessel bound $0<M$ if and only if the synthesis operator $\theta^*$ is a well-defined bounded $A$-linear operator with $\| \theta^* \| \leq \sqrt{M}$. 
			\item[(ii)]   $\{ x_i \}_{i=1}^\infty$ is a  standard frame of $\mathcal X$ with frame bounds $0<m \leq M$ if and only if  $\{ x_i \}_{i=1}^\infty$ is a generating set of $\mathcal X$ and the synthesis operator $\theta^*$ is a bounded adjointable operator with the property
			\[ 
			\quad \quad \quad \,\, \sqrt{m} \| \{ c_i \}_{i=1}^\infty \| \leq \| \theta^*( \{ c_i \}_{i=1}^\infty ) \| \leq  \sqrt{M} \| \{ c_i \}_{i=1}^\infty \| \,\, {\rm for} \,\, {\rm any} \,\,  \{ c_i \}_{i=1}^\infty \in {\rm Ker}(\theta^*)^\bot  \, .
			\]   
			In that case $\theta^*$ is surjective, adjointable, and hence, has closed range.    
			\item[(iii)]  $\{ x_i \}_{i=1}^\infty$ is a standard Riesz basis of $\mathcal X$ with frame bounds $0<m \leq M$ if and only if  $\{ x_i \}_{i=1}^\infty$ is a generating set of $\mathcal X$, $\theta^*(\{ c_i \}_{i \in I})=\sum_{i \in I} c_ix_i = 0$ for some set $\{ c_i \}_{i \in  I} \in H_A$ if and only if  ${c_ix_i}_{i \in I}$ is the zero sequence, and the synthesis operator $\theta^*$ is a bounded operator with the property
			\[ 
			\quad \quad \quad \,\, \sqrt{m} \| \{ c_i \}_{i=1}^\infty \| \leq \| \theta^*( \{ c_i \}_{i=1}^\infty ) \| \leq  \sqrt{M} \| \{ c_i \}_{i=1}^\infty \| \,\, {\rm for} \,\, {\rm any} \,\,  \{ c_i \}_{i=1}^\infty \in {\rm Ker}(\theta^*)^\bot  \, .
			\]
			In that case $\theta^*$ is bijective between the orthogonal complement of its kernel in $H_A$  and $\mathcal X$. Its canonical dual frame is a Riesz basis, too.
			\item[(iv)]  $\{ x_i \}_{i=1}^\infty$ is a standard modular Riesz basis of $\mathcal X$ with frame bounds $0<m \leq M$ if and only if  $\{ x_i \}_{i=1}^\infty$ is a generating set of $\mathcal X$ and the synthesis operator $\theta^*$ is a bounded operator with the property
			\[ 
			\quad \quad \quad \,\, \sqrt{m} \| \{ c_i \}_{i=1}^\infty \| \leq \| \theta^*( \{ c_i \}_{i=1}^\infty ) \| \leq  \sqrt{M} \| \{ c_i \}_{i=1}^\infty \| \,\, {\rm for} \,\, {\rm any} \,\,  \{ c_i \}_{i=1}^\infty \in H_A  \, .
			\]
			In that case $\theta^*$ is bijective and  $\{ x_i \}_{i=1}^\infty$ admits a unique dual frame (which is a standard modular Riesz basis, too). 
		\end{itemize}
	\end{proposition}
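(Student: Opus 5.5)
The statement collects results from \cite{HJM_2009} and \cite{HJLM_2008}. My plan is to prove the four items in order, each building on the previous one. The common tool is the pair $\theta$, $\theta^*$, linked by the identity
\begin{equation*}
\langle \theta x,\theta x\rangle=\sum_{i}\langle x,x_i\rangle\langle x_i,x\rangle ,
\end{equation*}
together with two standard facts about adjointable operators on Hilbert C*-modules (cf. \cite{MT}):
\begin{itemize}
\item[(a)] an adjointable operator has closed range if and only if its adjoint does, and then both ranges are orthogonally complemented;
\item[(b)] for $T\in B^a$ one has $\langle Tx,Tx\rangle\le \|T\|^2\langle x,x\rangle$, and for a positive invertible $S$ one has $\langle Sx,x\rangle\ge \|S^{-1}\|^{-1}\langle x,x\rangle$.
\end{itemize}

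For (i), suppose first that the sequence is Bessel. Apply the Bessel inequality to tails to see that $\sum_{i}\langle x_i,x\rangle e_i$ is Cauchy in $H_A$. Hence $\theta$ is well defined with $\|\theta x\|^2\le M\|x\|^2$, and $\theta$ is adjointable with $\theta^*(e_i a)=x_i a$. So $\theta^*$ is the synthesis operator and $\|\theta^*\|=\|\theta\|\le\sqrt M$.

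Conversely, suppose $\theta^*$ is bounded with norm at most $\sqrt M$. For fixed $x$ and $n$, put $c_n=\sum_{i\le n}e_i\langle x_i,x\rangle$. Then
\begin{equation*}
\|c_n\|^2=\bigl\|\langle \theta^* c_n,x\rangle\bigr\|\le \sqrt M\|c_n\|\|x\|,
\end{equation*}
so $\|c_n\|\le\sqrt M\|x\|$. Running the same argument on blocks $n<i\le k$, and using that $\theta^*$ kills nothing it should not, shows the partial sums converge in norm. This defines $\theta$ as the adjoint of $\theta^*$, and fact (b) applied to $T=\theta$ turns the norm bound into the operator inequality $\sum\langle x,x_i\rangle\langle x_i,x\rangle\le M\langle x,x\rangle$. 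The subtle point here is the tail/convergence argument, since a bounded $A$-linear map need not be adjointable a priori.

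For (ii), let $S=\theta^*\theta$ be the frame operator. The lower frame inequality says $S\ge m$, so $S$ is invertible and $\theta$ is bounded below. Hence $\operatorname{Ran}\theta$ is closed, and by fact (a) $H_A=\operatorname{Ran}\theta\oplus\operatorname{Ker}\theta^*$. Then $\theta^*$ is surjective; in particular the $x_i$ generate $\mathcal X$. Its restriction to $\operatorname{Ker}(\theta^*)^\perp=\operatorname{Ran}\theta$ is a bijection onto $\mathcal X$, with inverse $\theta S^{-1}$, and estimating $\|\theta S^{-1}\|$ gives the lower bound $\sqrt m$. For the converse, the lower norm estimate on $\operatorname{Ker}(\theta^*)^\perp$ plus generation shows that $\theta^*$ has closed range equal to $\mathcal X$. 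Fact (a) then gives $\theta$ closed range and injectivity, so $S=\theta^*\theta$ is positive and invertible. Fact (b) converts this into $S\ge m'$ for some $m'>0$. The main obstacle is precisely this translation between the scalar norm inequalities and the $A$-valued operator inequalities. The constants must be tracked through $\|S^{-1}\|$ and the polar-type identity $\|\theta^* c\|^2=\|\langle\theta\theta^* c,c\rangle\|$ on $\operatorname{Ran}\theta$. I would first check that $\theta\theta^*$ restricted to $\operatorname{Ran}\theta$ is unitarily equivalent to $S$, which identifies its spectrum and yields the optimal constants.

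Item (iii) follows from (ii) by adding the Riesz condition. That the summands $c_ix_i$ vanish individually whenever $\theta^*c=0$ is by definition the extra requirement. Bijectivity of $\theta^*\colon\operatorname{Ker}(\theta^*)^\perp\to\mathcal X$ is part of (ii). For the canonical dual $\{S^{-1}x_i\}$, its synthesis operator is $S^{-1}\theta^*$, which has the same kernel, so the same characterization applies. Item (iv) is the case $\operatorname{Ker}\theta^*=0$: the estimate on all of $H_A$ forces injectivity, and (ii) gives surjectivity, so $\theta^*$ is invertible. A dual frame $\{y_i\}$ has synthesis operator $\sigma_y$ with $\sigma_y\theta=\mathrm{id}$. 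Composing with the invertible $\theta^*$ shows $\sigma_y=(\theta^*\theta)^{-1}\theta^*=S^{-1}\theta^*$, which gives uniqueness, and the invertibility of $S^{-1}\theta^*$ shows the dual is again a modular Riesz basis.
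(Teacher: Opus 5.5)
The paper gives no proof of this proposition; it quotes it from the literature, so your proposal has to stand on its own. Several parts are sound in outline: the forward half of (ii) via $S\ge m$ and $\|\theta S^{-1}\|^2=\|S^{-1}\|$, the sharp constants through $\theta\theta^*|_{\operatorname{Ran}\theta}\simeq S$, and the dual-frame arguments in (iii) and (iv). However, the converse of (i) has a genuine gap, and it cannot be closed the way you set it up.

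\textbf{The converse of (i).} You assume only that $\theta^*$ is a bounded $A$-linear map, and you must produce norm convergence of $\sum_i e_i\langle x_i,x\rangle$ in $H_A$. Put $d_{n,k}=\sum_{n<i\le k}e_i\langle x_i,x\rangle$. Your block estimate gives $\|d_{n,k}\|^2\le\|\theta^*d_{n,k}\|\,\|x\|\le\sqrt M\,\|d_{n,k}\|\,\|x\|$. That is a uniform bound, not decay. Norm-bounded increasing sequences of positive elements of $A$ need not converge in norm, and ``$\theta^*$ kills nothing it should not'' supplies no estimate.

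In fact the implication is false without adjointability. Let $A=\ell^\infty$, $\mathcal X=A$ (generated by $1$), and $x_i=\delta_i$, the characteristic function of $\{i\}$. For $c\in H_A$ one has $\|\sum_{i=n}^k\delta_ic_i\|^2=\|\sum_{i=n}^k\delta_ic_i^*c_i\|\le\|\sum_{i=n}^kc_i^*c_i\|$. Hence $\theta^*$ is well defined and $A$-linear with $\|\theta^*\|\le 1$. But $\sum_{i\le n}\langle 1,x_i\rangle\langle x_i,1\rangle=\sum_{i\le n}\delta_i$ is not norm-Cauchy, so $\{\delta_i\}$ is not a standard Bessel sequence (and $\theta^*$ is not adjointable).

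In your forward direction, the Cauchy property of $\sum_i e_i\langle x_i,x\rangle$ comes from standardness (norm convergence of the frame sum), not from the Bessel inequality applied to tails. That is exactly what the converse must manufacture, and boundedness cannot do it. The statement holds only if $\theta^*$ is understood to be adjointable, as the notation suggests. Then the step is immediate. Set $T=(\theta^*)^*$; then $\langle e_i,Tx\rangle=\langle\theta^*e_i,x\rangle=\langle x_i,x\rangle$, so $Tx=\sum_i e_i\langle x_i,x\rangle$ converges in $H_A$. Therefore $\sum_i\langle x,x_i\rangle\langle x_i,x\rangle=\langle Tx,Tx\rangle$ converges in norm and is at most $\|T\|^2\langle x,x\rangle\le M\langle x,x\rangle$.

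\textbf{Consequences for (iii), (iv) and (ii).} The same hidden hypothesis breaks your reduction of the converses of (iii) and (iv) to (ii). There only boundedness of $\theta^*$ is assumed, whereas (ii) needs adjointability, and in (iv) adjointability is part of the definition of a modular Riesz basis. Over $A=\ell^\infty$ with $0<\epsilon<1$, consider $c\mapsto c+\epsilon\, e_1\sum_i\delta_ic_i$ on $H_A$. It is the synthesis operator of $x_i=e_i+\epsilon e_1\delta_i$ and satisfies $(1-\epsilon)\|c\|\le\|\theta^*c\|\le(1+\epsilon)\|c\|$ on all of $H_A$. It is surjective and the Riesz condition holds, yet it is not adjointable and $\{x_i\}$ is not a standard Bessel sequence. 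So you must assume and use adjointability of $\theta^*$ explicitly throughout.

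In (iii), a Riesz basis must also satisfy $x_i\neq 0$, and this does not follow from the listed conditions: $\{0,e_1,e_2,\dots\}$ in $H_A$ satisfies all of them. So ``(ii) plus the Riesz condition'' yields (iii) only after adding that hypothesis.

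A smaller point concerns the converse of (ii). $\operatorname{Ker}\theta^*$ is not known in advance to be complemented, so you cannot write $\operatorname{Ran}\theta^*=\theta^*(\operatorname{Ker}(\theta^*)^\perp)$. Instead, use $\overline{\operatorname{Ran}\theta}\subseteq\operatorname{Ker}(\theta^*)^\perp$ to see that $\theta^*(\overline{\operatorname{Ran}\theta})$ is closed. It contains $\operatorname{Ran}\theta^*\theta$, whose closure is $\overline{\operatorname{Ran}\theta^*}$, so $\operatorname{Ran}\theta^*$ is closed.
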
  
	
	\section{Main results}
	
	We start with the following definition.
	
	\begin{definition} {\rm (cf.~\cite[Definition 3.1]{BM_2025})}   \label{def_pseudo_frame} \newline
		Let $A$ be a unital C*-algebra. A Bessel sequence for a Hilbert $A$-module $\mathcal M$ is a \textit{pseudo-frame} if it becomes a frame after adding finitely many appropriate vectors spanning an algebraically generated Hilbert $A$-submodule of $\mathcal M$. 
	\end{definition}
	In the next proposition we will characterize pseudo-frames in $ H_{\mathcal{A}} $ in terms of their synthesis operators.
	
	\begin{proposition}  \label{prop_pseudo1}
		A Bessel sequence $\{x_n\}_{n \in \mathbb{N}} \subseteq H_{\mathcal{A}}$ is a pseudo-frame for $ H_{\mathcal{A}} $ if and only if the associated synthesis operator is lower semi-$\mathcal{A}$-Fredholm.
	\end{proposition}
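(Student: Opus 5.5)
We prove both directions by comparing the synthesis operator $T=\theta^*$ of $\{x_n\}$ with the synthesis operator of the enlarged sequence. The key tool is Proposition \ref{prop_sequ}(ii): a Bessel sequence in $H_{\mathcal{A}}$ is a standard frame exactly when its synthesis operator is a bounded adjointable surjection onto $H_{\mathcal{A}}$. Since $H_{\mathcal{A}}$ is self-dual enough for the standard setting ($T$ adjointable, $T^*=\theta$), surjectivity plus the fact that adjointable surjections have closed range turns this into an operator-theoretic statement.

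\emph{($\Rightarrow$).} Suppose that adding $y_1,\dots,y_k$ turns $\{x_n\}$ into a frame. Re-index the enlarged sequence as $y_1,\dots,y_k,x_1,x_2,\dots$. Its synthesis operator is
$$\tilde T=T\circ S+R,$$
where:
\begin{itemize}
\item $S$ is the backward shift by $k$ places, an adjointable operator with $SS^*=I$;
\item $R(c)=\sum_{i\le k}c_iy_i$, a finite-rank element of $\mathcal{K}^*(H_{\mathcal{A}})$ (a sum of operators $\theta_{y_i,e_i}$).
\end{itemize}
By Proposition \ref{prop_sequ}(ii), $\tilde T$ is surjective and adjointable. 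Hence $\tilde T$ is $\mathcal{A}$-Fredholm in the lower sense: taking $M_1=\ker\tilde T^{\perp}$, $N_1=\ker\tilde T$, $M_2=H_{\mathcal{A}}$, $N_2=0$ works, since the kernel of an adjointable surjection is orthogonally complemented. So $\tilde T\in\mathcal{M}\Phi_-(H_{\mathcal{A}})$.

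Next, $TS=\tilde T-R$ is lower semi-$\mathcal{A}$-Fredholm, because $\mathcal{M}\Phi_-$ is stable under compact perturbations in $\mathcal{K}^*(H_{\mathcal{A}})$ (the known result from \cite{BJMA,MF}). Finally $T=(TS)S^*$, using $SS^*=I$. Here $S^*$ is an isometry whose range has finitely generated orthogonal complement $L_k$, so $S^*$ is $\mathcal{A}$-Fredholm. The product of lower semi-$\mathcal{A}$-Fredholm operators is lower semi-$\mathcal{A}$-Fredholm \cite{BJMA}, so $T\in\mathcal{M}\Phi_-(H_{\mathcal{A}})$.

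\emph{($\Leftarrow$).} Let $T\in\mathcal{M}\Phi_-(H_{\mathcal{A}})$ with decomposition $M_1\tilde\oplus N_1\to M_2\tilde\oplus N_2$, where $N_2$ is finitely generated. Then $\operatorname{Im}T\supseteq M_2$. Since $N_2$ is finitely generated, choose generators $y_1,\dots,y_k$ of $N_2$. These vectors span an algebraically (finitely) generated submodule, as Definition \ref{def_pseudo_frame} requires.

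The synthesis operator $\tilde T$ of $y_1,\dots,y_k,x_1,\dots$ has range $\operatorname{Im}T+N_2\supseteq M_2+N_2=H_{\mathcal{A}}$, so $\tilde T$ is surjective. It is adjointable, being the sum of an adjointable operator and a finite-rank one. Hence $\tilde T$ has closed range and is bounded below on $\ker\tilde T^{\perp}$ by the open mapping theorem. The enlarged sequence is still Bessel, because finitely many added vectors preserve the Bessel property. Proposition \ref{prop_sequ}(ii) then gives that the enlarged sequence is a frame.

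\emph{Main obstacle.} The delicate point is invoking the stability results for $\mathcal{M}\Phi_-$: invariance under $\mathcal{K}^*$-perturbations and under composition with $\mathcal{A}$-Fredholm operators. If these are not directly available, the fallback is to argue by hand. Surjectivity of $TS+R$ forces $H_{\mathcal{A}}=\operatorname{Im}T+\operatorname{span}\{y_i\}$. One then builds an explicit decomposition by taking $M_2$ to be the complement of a finitely generated projective module containing the $y_i$, using the Dupré–Fillmore / Kasparov stabilization, e.g. $M_2=L_n^{\perp}$ for large $n$ after a small perturbation. This step is where I expect the technical difficulty.
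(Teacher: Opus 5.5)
Your proof is correct and follows essentially the same route as the paper: you use the factorization $\tilde T = TS + R$ (with $S$ the backward shift by $k$ places) together with compact-perturbation stability of $\mathcal{M}\Phi_{-}(H_{\mathcal{A}})$ for the forward direction, and the inclusion $\operatorname{Im}\tilde T \supseteq M_2 + N_2 = H_{\mathcal{A}}$ plus Proposition \ref{prop_sequ}(ii) for the converse. The differences are minor: you check directly that an adjointable surjection is lower semi-$\mathcal{A}$-Fredholm, where the paper cites \cite[Lemma 12]{CAOT}, and you pass from $TS$ to $T$ by writing $T=(TS)S^{*}$ and using closure of $\mathcal{M}\Phi_{-}$ under products, where the paper cites \cite[Corollary 2.6]{BJMA}; so the fallback in your last paragraph is not needed.
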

	
	\begin{proof} 
		The proof is motivated by the proof of  \cite[Proposition 3.1]{BM_2025}. Suppose first that $\{ x_i \}_{i \in \mathbb{N}}$ be a pseudo-frame for $ H_{\mathcal{A}} $.  Then there exists a finite set of elements $\{ y_j \}_{j=1}^m$ of $ H_{\mathcal{A}} $ such that the sequence $\{ y_1, \ldots y_m, x_1, \ldots , x_i, \ldots \}$ is a frame of $ H_{\mathcal{A}} $. By the same arguments as in the proof of \cite[Proposition 3.1]{BM_2025}, we deduce that the synthesis operator $\theta_1^*$ of that frame can be written as $\theta_1^* = \theta^*B^m + GQ_m$, where $\theta^*$ denotes the synthesis operator of the pseudo-frame  $\{ x_i \}_{i \in \mathbb{N}},$ $B$ the backward shift on $ H_{\mathcal{A}} , Q_m$ the orthogonal projection onto $L_m$ and $G: L_m \to  H_{\mathcal{A}} $ is defined as $G((a_1, \ldots , a_m)) = \sum_{j=1}^m a_j y_j$. By Proposition \ref{prop_sequ}, $\Theta_{1}^{*}$ is surjective.	Since $\Theta_{1}^{*}$ is surjective,
		it is lower semi-$\mathcal{A}$-Fredholm by \cite[Lemma 12]{CAOT}.
		Since the set of lower semi-$\mathcal{A}$-Fredholm
		operators is invariant under compact perturbations,
		which follows from
\cite[Theorem 2.3]{BJMA}		
		since lower semi-$\mathcal{A}$-Fredholm operators
		correspond to right-invertible elements in
		$B^{a}(H_{\mathcal{A}})/\mathcal{K}^{*}(H_{\mathcal{A}})$,
		we have that
		$
		\Theta^{*}B^{m}
		$
		is lower semi-$\mathcal{A}$-Fredholm.
		Hence, by \cite[Corollary 2.6]{BJMA} we conclude that
		$
		\Theta^{*}
		$
		is lower semi-$\mathcal{A}$-Fredholm.
		
			Conversely, suppose that the synsthises operator $\Theta^{*}$ of the Bessel sequence $\{x_n\}_{n \in \mathbb{N}}$ is lower semi-$\mathcal{A}$-Fredholm. Since $\Theta^{*}$ is lower semi-$\mathcal{A}$-Fredholm, it has matrix
		
		$$
		\begin{pmatrix}
			(\Theta^{*})_{1} & 0 \\
			0 & (\Theta^{*})_{4}
		\end{pmatrix}
		$$
		
		with respect to a decomposition
		$$
		H_{\mathcal{A}}
		=
		M_{1}\tilde{\oplus}N_{1}
		\overset{\Theta^{*}}{\longrightarrow}
		M_{2}\tilde{\oplus}N_{2}
		=
		H_{\mathcal{A}},
		$$
		where $(\Theta^{*})_{1}$ is an isomorphism and
		$N_{2}$ is finitely generated. Let
		$
		\{y_{1},\ldots,y_{m}\}
		$
		be generators of $N_{2}$ and
		$
		G:L_{m}\to N_{2}
		$
		be given again by
		$$
		G((a_{1},\ldots,a_{m}))
		=
		\sum_{j=1}^{m}y_{j}\cdot a_{j}.
		$$
		The synthesis operator of $
		\{y_{1},\ldots,y_{m},f_{1},f_{2},\ldots\}
		$ is again $
		S=\Theta^{*}B_{m}+GQ_m.
		$ Let
		$v \in N_{2}$ be given. Since $G$ is
		surjective (onto $N_{2}$), there exists some
		$x_{0} \in L_{m}$ such that
		$
		Gx_{0}=v.
		$
		Then
		$$
		S(x_{0})
		=
		\Theta^{*}B^{m}x_{0}+GQ_m x_{0}
		= 0 + Gx_{0}=
		v,
		$$
		so
		$
		N_{2}\subseteq \operatorname{Im}S,
		$
		since $v \in N_{2}$ was chosen arbitrarily.
		Let now $w \in M_{2}$ be given. Since
		$(\Theta^{*})_{1}$ is an isomorphism from
		$M_{1}$ onto $M_{2}$, there exists some
		$x_{1} \in M_{1}$ such that
		$
		\Theta^{*}(x_{1})
		=
		(\Theta^{*})_{1}(x_{1})
		=
		w.
		$
		Hence, we get that
		$$
		S(F^{m}x_{1})
		=
		\Theta^{*}B^{m}F^{m}x_{1}
		+
		GQ_m F^{m}x_{1}
		=
		\Theta^{*}B^{m}F^{m}x_{1}
		=
		\Theta^{*}(x_{1})
		=
		w,
		$$
		
		so $ M_{2}\subseteq \operatorname{Im}S, $ since
		$w \in M_{2}$ was chosen arbitrarily. Thus,
		$
		M_{2}+N_{2}
		\subseteq
		\operatorname{Im}S,
		$	
		since
		$
		M_{2}\subseteq \operatorname{Im}S,
		N_{2}\subseteq \operatorname{Im}S,
		$
		and $\operatorname{Im}S$ is a submodule of $H_{\mathcal{A}} .$
		However,
		$$
		M_{2}+N_{2}
		=
		M_{2}\tilde{\oplus}N_{2}
		=
		H_{\mathcal{A}},
		$$
		hence
		$
		\operatorname{Im}S
		=
		H_{\mathcal{A}},
		$
		so $S$ is surjective.
		Thus,
		$
		\{y_{1},\ldots,y_{m},f_{1},f_{2},\ldots\}
		$  is a frame.	
		\end{proof}
		
		Motivated by the concept of pseudo Riesz sequences on Hilbert spaces introduced in \cite[Definition 4.1]{BM_2025}, we will now provide a generalization of this concept in the setting of Bessel sequences in $ H_{\mathcal{A}}.$
	
	\begin{definition}
		A \textit{pseudo-Riesz sequence} is a Bessel sequence
		$\{f_n\}_{n \in \mathbb{N}} \subseteq H_{\mathcal{A}}$
		for which there exists a number $m \in \mathbb{N}$ such that
		$\{f_n\}_{m+1}^{\infty}$ is a modular Riesz basis for $ \overline{\operatorname{Span}_{\mathcal{A}}
			\left\{
			f_n \;\middle|\; m+1 \leq n,\; n \in \mathbb{N}
			\right\}} .$ 
	\end{definition}
	In the next proposition , we will characterize pseudo-Riesz sequences in $ H_{\mathcal{A}} $ in terms of their synthesis operators.
	
	\begin{proposition}  \label{prop_pseudo2}
		A Bessel sequence $\{x_n\}_{n \in \mathbb{N}} \subseteq H_{\mathcal{A}}$ is a pseudo-Riesz sequence for $ H_{\mathcal{A}} $ if and only if the associated synthesis operator is upper semi-$\mathcal{A}$-Fredholm.
	\end{proposition}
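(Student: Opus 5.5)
The plan is to mirror the proof of Proposition \ref{prop_pseudo1}, now using the forward shift $F$ (the adjoint of the backward shift $B$) and the upper semi-$\mathcal{A}$-Fredholm class. Let $\Theta^{*}$ denote the synthesis operator of $\{x_n\}$. For $m \in \mathbb{N}$, the synthesis operator of the tail $\{x_n\}_{n\geq m+1}$, viewed as an operator $H_{\mathcal{A}} \to H_{\mathcal{A}}$, is $\Theta^{*}F^{m}$. This holds because $F^{m}$ sends $e_i$ to $e_{i+m}$ and hence $\Theta^{*}F^{m}e_i = x_{i+m}$.

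For the forward direction, assume $\{x_n\}_{n\geq m+1}$ is a modular Riesz basis for $\mathcal{X}_m := \overline{\operatorname{Span}_{\mathcal{A}}\{x_n \mid n\geq m+1\}}$. By Proposition \ref{prop_sequ}(iv), $\Theta^{*}F^{m}$ is then bounded below on all of $H_{\mathcal{A}}$ and maps bijectively onto $\mathcal{X}_m$. Being adjointable with closed range, its range $\mathcal{X}_m$ is orthogonally complementable in $H_{\mathcal{A}}$ by the standard closed-range theorem for adjointable operators. So $\Theta^{*}F^{m}$ is an isomorphism $H_{\mathcal{A}} \to \mathcal{X}_m$ with $H_{\mathcal{A}} = \mathcal{X}_m \oplus \mathcal{X}_m^{\perp}$. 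This is an upper semi-$\mathcal{A}$-Fredholm decomposition with $M_1 = H_{\mathcal{A}}$, $N_1 = 0$, $M_2 = \mathcal{X}_m$, $N_2 = \mathcal{X}_m^{\perp}$. Next, write $\Theta^{*} = \Theta^{*}F^{m}B^{m} + \Theta^{*}P_m$, where $P_m = I - F^{m}B^{m}$ is the orthogonal projection onto $L_m$. The composition $\Theta^{*}F^{m}B^{m}$ is upper semi-$\mathcal{A}$-Fredholm because $B^{m}$ is $\mathcal{A}$-Fredholm, and composition preserves the upper class, as in \cite[Corollary 2.6]{BJMA}. The term $\Theta^{*}P_m$ has finitely generated range and so lies in $\mathcal{K}^{*}(H_{\mathcal{A}})$. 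Since the upper semi-$\mathcal{A}$-Fredholm operators correspond to left-invertible elements of the Calkin algebra $B^{a}(H_{\mathcal{A}})/\mathcal{K}^{*}(H_{\mathcal{A}})$ by \cite[Theorem 2.3]{BJMA}, the class is stable under compact perturbations, and therefore $\Theta^{*}$ is upper semi-$\mathcal{A}$-Fredholm.

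For the converse, assume $\Theta^{*} \in \mathcal{M}\Phi_{+}(H_{\mathcal{A}})$. Then $\Theta^{*}F^{m} \in \mathcal{M}\Phi_{+}(H_{\mathcal{A}})$ for every $m$. The goal is to choose $m$ so that $\Theta^{*}F^{m}$ is bounded below on all of $H_{\mathcal{A}}$, that is, $\|\Theta^{*}F^{m}c\| \geq \sqrt{\mu}\,\|c\|$ for some $\mu > 0$. Once this holds, $\Theta^{*}F^{m}$ is injective with closed range equal to $\mathcal{X}_m$, since its range contains every $x_n$ with $n \geq m+1$ and is contained in $\mathcal{X}_m$. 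Proposition \ref{prop_sequ}(iv) then gives that $\{x_n\}_{n\geq m+1}$ is a modular Riesz basis for $\mathcal{X}_m$.

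To find such an $m$, take a decomposition $H_{\mathcal{A}} = M_1 \tilde\oplus N_1$ with $N_1$ finitely generated and $\Theta^{*}|_{M_1}$ an isomorphism onto $M_2$. I would first try to replace it by a decomposition in which $M_1$ is large: by \cite[Lemma 2.7.10]{MT}-type arguments (Dupré–Fillmore/Mishchenko), one may assume $N_1 \subseteq L_k$ for some $k$ up to a small perturbation. More concretely, the plan is as follows. Let $P$ be the skew projection onto $N_1$ along $M_1$. Since $N_1$ is finitely generated, $P$ is compact in $\mathcal{K}^{*}(H_{\mathcal{A}})$. Hence $\|P F^{m}\| = \|P (I-P_m)\| \to 0$ as $m\to\infty$, because $\|P - P P_m\| \to 0$ for compact $P$. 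For $c \in H_{\mathcal{A}}$, write $F^{m}c = (I-P)F^{m}c + PF^{m}c$. The first component lies in $M_1$, where $\Theta^{*}$ is bounded below by some constant $\delta$. This yields
\[
\|\Theta^{*}F^{m}c\| \geq \delta\,\|(I-P)F^{m}c\| - \|\Theta^{*}\|\,\|PF^{m}\|\,\|c\| \geq \bigl(\delta(1-\|PF^{m}\|) - \|\Theta^{*}\|\,\|PF^{m}\|\bigr)\|c\|,
\]
which is positive for $m$ large.

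The main obstacle I expect is justifying $\|P(I-P_m)\| \to 0$, or equivalently $\|(I-P_m)P^{*}\| \to 0$, for compact adjointable $P$ on $H_{\mathcal{A}}$. This is the statement that $I-P_m$ tends to $0$ strictly on $\mathcal{K}^{*}(H_{\mathcal{A}})$. I would check it first on elementary operators $\theta_{x,y}$, using $\|(I-P_m)x\| \to 0$ for $x \in H_{\mathcal{A}}$, and then extend by density. The skew projection $P$ is adjointable because both summands in $H_{\mathcal{A}} = M_1 \tilde\oplus N_1$ are complementable.
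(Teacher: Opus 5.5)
Your first direction (Riesz tail $\Rightarrow$ $\mathcal{M}\Phi_{+}$) is correct. It differs from the paper only in going through the Calkin algebra via $\Theta^{*}=\Theta^{*}F^{m}B^{m}+\Theta^{*}P_m$, whereas the paper applies \cite[Lemma 3.1]{BJMA} directly to the fact that $\Theta^{*}$ is bounded below on $L_m^{\perp}$.

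The gap is in the converse, at the claim that the skew projection $P$ onto $N_1$ along $M_1$ is adjointable ``because both summands are complementable''. Only $N_1$ is known to be orthogonally complementable, since it is finitely generated. The first summand $M_1$ of an $\mathcal{M}\Phi_{+}$-decomposition need not be. In that case $P$ is not adjointable, does not lie in $\mathcal{K}^{*}(H_{\mathcal{A}})$, and $\|PF^{m}\|\to 0$ can fail.

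For example, take $\mathcal{A}=\ell^{\infty}$, let $p_i$ be the $i$-th minimal projection, and set $g(x)=\sum_{i\geq 2}p_ix_i$ for $x\in L_1^{\perp}$. This is a bounded $\mathcal{A}$-linear map into $\mathcal{A}$ that is not of the form $\langle y,\cdot\rangle$ with $y\in H_{\mathcal{A}}$. Put $M_1=\{x-g(x)e_1 : x\in L_1^{\perp}\}$ and $N_1=L_1$. Then $M_1\tilde\oplus L_1\to M_1\tilde\oplus L_1$ is a legitimate $\mathcal{M}\Phi_{+}$-decomposition of the identity, which is the synthesis operator of $\{e_n\}$. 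Yet $PF^{m}e_1=Pe_{m+1}=p_{m+1}e_1$, so $\|PF^{m}\|\geq 1$ for every $m$ and your estimate yields nothing. The step you single out as the main obstacle, $\|K(I-P_m)\|\to 0$ for $K\in\mathcal{K}^{*}(H_{\mathcal{A}})$, is routine. The real issue is having a compact operator to apply it to.

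The repair is short and uses tools you already invoke. You have two options:
\begin{itemize}
\item Replace $M_1$ by $N_1^{\perp}$. Since $\Theta^{*}$ maps $M_1$ into $M_2$ and $N_1$ into $N_2$, and $I-P$ restricts to a bounded bijection of $N_1^{\perp}$ onto $M_1$, the operator $\Theta^{*}$ is bounded below on $N_1^{\perp}$. Your estimate then goes through with the orthogonal projection onto $N_1$, which does lie in $\mathcal{K}^{*}(H_{\mathcal{A}})$.
\item Use left invertibility modulo compacts directly. If $G\Theta^{*}=I+K$ with $K\in\mathcal{K}^{*}(H_{\mathcal{A}})$, then $\|G\|\,\|\Theta^{*}F^{m}c\|\geq(1-\|K(I-P_m)\|)\|c\|$, which is bounded away from $0$ for large $m$.
\end{itemize}

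The paper instead quotes \cite[Theorem 2.7.9]{MT}, \cite[Lemmas 2.7.11, 2.7.13]{MT} and \cite[Lemma 2.16]{BJMA} to get an $\mathcal{M}\Phi_{+}$-decomposition whose first summand is already $L_m^{\perp}$, so $\Theta^{*}F^{m}$ is an isomorphism onto its range at once. Once repaired, your compactness argument is a more self-contained substitute for that normalization.
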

	
	  \begin{proof}
	  	Suppose first that the synthesis operator $\theta^*$ of the Bessel sequence $\{ x_i \}_{i \in \mathbb{N}}$ is upper semi-$\mathcal{A}$-Fredholm. Let $$
	  	H_{\mathcal{A}} = M_{1} \tilde{\oplus} N_{1}
	  	\overset{\Theta^{*}}{\longrightarrow}
	  	M_{2}\tilde{\oplus} N_{2} = H_{\mathcal{A}}
	  	$$
	  	be an $\mathcal{M}\Phi_{+}$-decomposition for $\Theta^{*}.$ Then, $N_{1} $ is finitely generated, hence, by the proof of \cite[Theorem 2.7.9]{MT}, \cite[Lemma 2.7.11]{MT}, \cite[Lemma 2.7.13]{MT} and \cite[Lemma 2.16]{BJMA}, there exist there exists some
	  	$m \in \mathbb{N}$ and some finitely generated closed submodule
	  	$P \subseteq H_{\mathcal{A}}$ such that	
	  	
	  	$$
	  	H_{\mathcal{A}} = L_{m}^{\perp}\tilde{\oplus}(P\tilde{\oplus}N_{1})
	  	\overset{\Theta^{*}}{\longrightarrow}
	  	\Theta^{*}(L_{m}^{\perp})\tilde{\oplus}
	  	\big(\Theta^{*}(P)\tilde{\oplus}N_{2}\big)
	  	= H_{\mathcal{A}},
	  	$$
	  	
	  	is an $\mathcal{M}\Phi_{+}-$ decomposition for $\Theta^{*}$ and
	  	$\Theta^{*}_{|_{P}}$ is an isomorphism onto $\Theta^{*}(P)$.
	  	Consequently, $\Theta^{*}_{|_{L_m^{\perp}}}$ is an adjointable $\mathcal{A}$-linear isomorphism of $L_m^{\perp}$ onto its image $\theta^*(L_m^{\perp})$. Let $F$ denote the forward shift on $H_{\mathcal{A}} $, then $\theta^*F^m$ is an adjointable $\mathcal{A}$-linear isomorphism of $H_{\mathcal{A}} $ onto $\theta^*(L_m^{\perp}),$ and for the elements of the associated standard orthonormal basis of  $H_{\mathcal{A}} $ we have $\theta^*F^m(e_i) = x_{m+i}$, $i \in \mathbb N$. Thus, $\{ x_{m+i} \}_{i=1}^\infty$ is a modular Riesz basis for $ \overline{\operatorname{Span}_{\mathcal{A}}
	  		\left\{
	  		x_n \;\middle|\; m+1 \leq n,\; n \in \mathbb{N}
	  		\right\}} .$ 
	  		
	  		Conversely, suppose that $\{ x_i \}_{i \in \mathbb{N}}$ is a pseudo-Riesz sequence for $H_{\mathcal{A}} $ and let  $m \in \mathbb{N}$ be such that
	  		$\{x_n\}_{m+1}^{\infty}$ is a modular Riesz basis for $$ \overline{\operatorname{Span}_{\mathcal{A}}
	  			\left\{
	  			x_n \;\middle|\; m+1 \leq n,\; n \in \mathbb{N}
	  			\right\}} .$$  The synthesis operator $\theta_1^*$ of this modular Riesz basis can be expressed as $\theta_1^* = \theta^* F^m$, where $\theta^*$ is the synthesis operator of the pseudo-Riesz sequence  $\{ x_i \}_{i \in \mathbb{N}}$ and $F$ is the forward shift operator on $H_{\mathcal{A}} $. Hence
	  		$$
	  		\Theta_{1}^{*}B^{m}
	  		=
	  		\Theta^{*}F^{m}B^{m}
	  		=
	  		\Theta^{*}Q_{m},  \text{ } (2.1)
	  		$$
	  		where $Q_{m}$ denotes the orthogonal projection onto
	  		$L_{m}^{\perp}$. Since $\Theta_{1}^{*}B^{m}$ is bounded below on
	  		$L_{m}^{\perp}$ because $B^{m}$ is an isomorphism
	  		from $L_{m}^{\perp}$ onto $H_{\mathcal{A}}$
	  		and $\Theta_{1}^{*}$ is an isomorphism from
	  		$H_{\mathcal{A}}$ onto $
	  		\overline{\operatorname{Span}_{\mathcal{A}}
	  			\left\{
	  			x_{m+1},x_{m+2},\ldots
	  			\right\}}, $
	  		it follows from (2.1) that $
	  		\Theta^{*}_{|_{L_{m}^{\perp}}}
	  		$ is bounded below. By \cite[Lemma 3.1]{BJMA}
	  		$\Theta^{*}$ is upper semi-$\mathcal{A}$-Fredholm.	
	  \end{proof} 
	  Motivated by the concept of pseudo-Riesz bases in Hilbert spaces introduced in \cite[Definition 5.1]{BM_2025}, we will now give a generalization of this concept in the setting of Bessel sequences in $H_{\mathcal{A}}.$
	  \begin{definition}
	  	A \textit{pseudo-Riesz basis} is a Bessel sequence
	  	$\{f_n\}_{n \in \mathbb{N}} \subseteq H_{\mathcal{A}}$
	  	for which there exists a number $m \in \mathbb{N}$ such that
	  	$\{f_n\}_{m+1}^{\infty}$ is a modular Riesz basis for $ \overline{\operatorname{Span}_{\mathcal{A}}
	  		\left\{
	  		f_n \;\middle|\; m+1 \leq n,\; n \in \mathbb{N}
	  		\right\}} $ and $
	  		\left(
	  		\overline{\operatorname{Span}_{\mathcal{A}}
	  		\left\{
	  		f_n
	  		\right\}_{n=m+1}^{\infty}}
	  		\right)^{\perp}
	  		$
	  		is finitely generated.
	  \end{definition}
	  
	  In the next proposition , we will characterize pseudo-Riesz bases in $ H_{\mathcal{A}} $ in terms of their synthesis operators.

\begin{proposition}
	A Bessel sequence
	$\{f_n\}_{n=1}^{\infty}\subseteq H_{\mathcal{A}}$
	is a pseudo-Riesz-basis if and only if the
	associated synthesis operator is
	$\mathcal{A}$-Fredholm.
\end{proposition}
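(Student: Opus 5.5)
We build on Proposition \ref{prop_pseudo2}, since $\mathcal{A}$-Fredholm means upper semi-$\mathcal{A}$-Fredholm plus the extra requirement that the complementary target summand $N_2$ can be taken finitely generated. Write $\Theta^{*}$ for the synthesis operator and $W=\overline{\operatorname{Span}_{\mathcal{A}}\{f_n\}_{n=m+1}^{\infty}}$.

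For the forward direction, suppose $\{f_n\}$ is a pseudo-Riesz basis with index $m$. By the converse part of Proposition \ref{prop_pseudo2}, $\Theta^{*}F^{m}$ is an isomorphism of $H_{\mathcal{A}}$ onto $W=\Theta^{*}(L_m^{\perp})$. In particular, $W$ is closed and complementable, being the range of an adjointable operator with closed range. Hence $H_{\mathcal{A}}=W\oplus W^{\perp}$, where $W^{\perp}$ is finitely generated by hypothesis. We then use the decomposition
\[
H_{\mathcal{A}}=L_m^{\perp}\oplus L_m\overset{\Theta^{*}}{\longrightarrow}W\oplus W^{\perp}=H_{\mathcal{A}}.
\]
Here $\Theta^{*}|_{L_m^{\perp}}$ is an isomorphism onto $W$, and both $L_m$ and $W^{\perp}$ are finitely generated. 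However, $\Theta^{*}(L_m)$ need not lie in $W^{\perp}$, so the matrix is only upper triangular:
\[
\begin{pmatrix}F_1 & F_2\\ 0 & F_4\end{pmatrix}.
\]
The standard diagonalisation from \cite{MT,BJMA} replaces $L_m$ by $N_1'=\{x-F_1^{-1}F_2x : x\in L_m\}$. This gives $H_{\mathcal{A}}=L_m^{\perp}\tilde\oplus N_1'$, with $N_1'\cong L_m$ finitely generated and $\Theta^{*}(N_1')\subseteq W^{\perp}$. That yields an $\mathcal{M}\Phi$-decomposition, so $\Theta^{*}$ is $\mathcal{A}$-Fredholm.

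For the converse, assume $\Theta^{*}\in\mathcal{M}\Phi(H_{\mathcal{A}})$. Since it is in particular upper semi-$\mathcal{A}$-Fredholm, the proof of Proposition \ref{prop_pseudo2} gives $m$ and an $\mathcal{M}\Phi_{+}$-decomposition
\[
H_{\mathcal{A}}=L_m^{\perp}\tilde\oplus(P\tilde\oplus N_1)\longrightarrow \Theta^{*}(L_m^{\perp})\tilde\oplus(\Theta^{*}(P)\tilde\oplus N_2),
\]
and $\{f_n\}_{n\ge m+1}$ is a modular Riesz basis of $W=\Theta^{*}(L_m^{\perp})$. It remains to show that $W^{\perp}$ is finitely generated. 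Starting from an $\mathcal{M}\Phi$-decomposition with $N_2$ finitely generated, I would rerun the construction of \cite[Lemma 2.16]{BJMA}; this keeps $N_2$ unchanged and adds only the finitely generated module $\Theta^{*}(P)$, since $P$ is finitely generated and $\Theta^{*}|_P$ is an isomorphism. Thus $W$ has the finitely generated complement $Z=\Theta^{*}(P)\tilde\oplus N_2$ in $H_{\mathcal{A}}$. Since $W$ is orthogonally complementable, the orthogonal projection onto $W^{\perp}$ restricted to $Z$ is a surjective $\mathcal{A}$-linear map $Z\to W^{\perp}$: any $y\in W^{\perp}$ equals $w+z$ with $w\in W$, $z\in Z$, and projecting gives $y=P_{W^{\perp}}z$. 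Hence $W^{\perp}$ is finitely generated, being the image of a finitely generated module.

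The main obstacle is making the $\mathcal{M}\Phi_{+}$-decomposition from Proposition \ref{prop_pseudo2} compatible with an $\mathcal{M}\Phi$-decomposition, that is, ensuring that $N_2$ in the reduced decomposition can still be taken finitely generated. If the cited lemmas do not keep track of this directly, I would argue instead via the Calkin algebra. By \cite[Theorem 2.3]{BJMA}, $\Theta^{*}$ is invertible modulo $\mathcal{K}^{*}(H_{\mathcal{A}})$. So $\Theta^{*}Q$, with $Q$ the projection onto $L_m^{\perp}$, is also $\mathcal{A}$-Fredholm, being a compact perturbation. It has closed range $W$ and kernel $L_m$. For an $\mathcal{A}$-Fredholm operator with closed range, the cokernel $W^{\perp}$ is finitely generated, using the results on closed-range $\mathcal{A}$-Fredholm operators in \cite{MT}.
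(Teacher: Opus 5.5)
Your proof is correct. The forward direction is the paper's own: diagonalising the upper triangular matrix over $L_m^{\perp}\oplus L_m\to W\oplus W^{\perp}$ gives the paper's decomposition (2.2), which is an $\mathcal{M}\Phi$-decomposition by definition once $W^{\perp}$ is finitely generated. The converse, however, takes a different route. The paper applies \cite[Lemma 3.1]{filomat} directly to (2.2); that lemma says that for an $\mathcal{A}$-Fredholm operator the target complement $N_2$ of \emph{every} $\mathcal{M}\Phi_{+}$-decomposition is finitely generated. You instead start from an $\mathcal{M}\Phi$-decomposition and carry $N_2$ through the reduction of Proposition~\ref{prop_pseudo2}. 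This gives the finitely generated skew complement $Z=\Theta^{*}(P)\tilde\oplus N_2$ of $W$, which you then push onto $W^{\perp}$ by the orthogonal projection. The point you flagged as the main obstacle does hold. If $K$ is any closed complement of $N_1$ (here $K=L_m^{\perp}\tilde\oplus P$), then $\Theta^{*}$ maps $K$ isomorphically onto the graph of a bounded map $M_2\to N_2$, and such a graph is again a complement of $N_2$; so the reduction never touches $N_2$. Your route is therefore more self-contained, needing nothing beyond the reduction already used in Proposition~\ref{prop_pseudo2} and an elementary projection argument. Its cost is that it directly certifies finite generation of $W^{\perp}$ only for the particular $m$ that construction produces. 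The paper's lemma-based route is shorter and yields more: every $m$ for which $\{f_n\}_{n>m}$ is a modular Riesz basis of its closed span witnesses the pseudo-Riesz-basis property. Your Calkin-algebra fallback is essentially the paper's argument in another form, since finite generation of the cokernel of a closed-range $\mathcal{A}$-Fredholm operator is an instance of the same principle behind \cite[Lemma 3.1]{filomat}.
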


\begin{proof}
	In the same way as in the proof of
	Proposition \ref{prop_pseudo2} we can show that
	$\{f_n\}_{n=m+1}^{\infty}$
	is a modular Riesz-basis for
	$
	\overline{\operatorname{Span}_{\mathcal{A}}
		\left\{
		f_n
		\right\}_{n=m+1}^{\infty}}
	$ if and only if $\Theta^{*}$ has the matrix
	$$
	\begin{pmatrix}
		\Theta^{*}_{1} & 0 \\
		0 & \Theta^{*}_{4}
	\end{pmatrix}
	$$
	with respect to the decomposition
	$$
	H_{\mathcal{A}}
	=
	L_{m}^{\perp}\tilde{\oplus}\mathcal{U}(L_{m})
	\overset{\Theta^{*}}{\longrightarrow}
	\Theta^{*}(L_{m}^{\perp})
	\oplus
	\bigl(\Theta^{*}(L_{m}^{\perp})\bigr)^{\perp}
	=
	H_{\mathcal{A}}, \text{ } (2.2)
	$$
	
	where $\Theta^{*}_{1}$ and $\mathcal{U}$ are isomorphisms.
	Now, clearly,
	$$
	\Theta^{*}(L_{m}^{\perp})
	=
	\overline{\operatorname{Span}_{\mathcal{A}}
		\left\{
		f_n
		\right\}_{n=m+1}^{\infty}}
	$$
	since $\Theta^{*}(L_{m}^{\perp})$ is closed, hence, by (2.2) and \cite[Lemma 3.1]{filomat} we deduce that
	$\Theta^{*}$ is $\mathcal{A}$-Fredholm if and only if
	$
	\left(
	\overline{\operatorname{Span}_{\mathcal{A}}
		\left\{
		f_n
		\right\}_{n=m+1}^{\infty}}
	\right)^{\perp}
	,$(which is equal to $\bigl(\Theta^{*}(L_{m}^{\perp})\bigr)^{\perp} $),
	is finitely generated.
\end{proof}

As we have seen in the introduction of the paper, semi-$\mathcal{A}$-Weyl operators are an important subclass of semi-$\mathcal{A}$-Fredholm operators. A natural question that arises in this connection is what kind of Bessel sequences will be induced by semi-$\mathcal{A}$-Weyl operators. We shall study this question more in detail in the next proposition. To this end, we need first to introduce the following notion.

\begin{definition}
	A \textit{pseudo-Riesz-Weyl sequence} is a Bessel sequence
	$\{f_n\}_{n \in \mathbb{N}} \subseteq H_{\mathcal{A}}$
	for which there exists a number $m \in \mathbb{N}$ such that
	$\{f_n\}_{m+1}^{\infty}$ is a modular Riesz basis for $ \overline{\operatorname{Span}_{\mathcal{A}}
		\left\{
		f_n \;\middle|\; m+1 \leq n,\; n \in \mathbb{N}
		\right\}} $ and
	
	$$
	L_m \preceq
	\overline{\operatorname{Span}_{\mathcal{A}}
	\left\{
	f_n \;\middle|\; m+1 \leq n,\; n \in \mathbb{N}
	\right\}}^{\perp},
	$$
	that is $L_m$ is isomorphic to a closed submodule
	of the orthogonal complement of
	$$
	\overline{\operatorname{Span}_{\mathcal{A}}
		\left\{
		f_n \;\middle|\; m+1 \leq n,\; n \in \mathbb{N}
		\right\}}
	\qquad.
	$$
\end{definition}

As we will see in the next proposition,  pseudo-Riesz-Weyl sequences in $ H_{\mathcal{A}} $ are induced by semi-$\mathcal{A}$-Weyl operators.

\begin{proposition} \label{semi-weyl}
	A Bessel sequence $\{f_n\}_n$ is
	a pseudo-Riesz-Weyl sequence if and only
	if the associated synthesis operator $\Theta^{*}$
	is upper semi-$\mathcal{A}$-Weyl.
\end{proposition}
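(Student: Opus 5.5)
The plan is to follow the proof of Proposition \ref{prop_pseudo2} and the Fredholm characterization of pseudo-Riesz bases, adding bookkeeping for the Weyl condition $N_1 \preceq N_2$.

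\emph{Pseudo-Riesz-Weyl implies upper semi-$\mathcal{A}$-Weyl.} Suppose $\{f_n\}_n$ is pseudo-Riesz-Weyl with index $m$. By the argument of Proposition \ref{prop_pseudo2}, $\Theta^{*}_{|_{L_m^{\perp}}}$ is an isomorphism onto
$$W:=\Theta^{*}(L_m^{\perp})=\overline{\operatorname{Span}_{\mathcal{A}}\{f_n\}_{n=m+1}^{\infty}},$$
which is closed. As in (2.2), I would write $H_{\mathcal{A}}=W\oplus W^{\perp}$ and $H_{\mathcal{A}}=L_m^{\perp}\oplus L_m$. With respect to these decompositions, $\Theta^{*}$ has the upper triangular form
$$\begin{pmatrix}\Theta^{*}_1 & \Theta^{*}_2\\ 0 & \Theta^{*}_4\end{pmatrix}.$$
The standard diagonalization step is to replace $L_m$ by
$$N_1:=\{-(\Theta^{*}_1)^{-1}\Theta^{*}_2 x + x \;:\; x\in L_m\}=\mathcal{U}(L_m).$$
This is closed, isomorphic to $L_m$ via $\mathcal{U}$, and satisfies $H_{\mathcal{A}}=L_m^{\perp}\tilde\oplus N_1$. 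Moreover $\Theta^{*}(N_1)\subseteq W^{\perp}$, so $\Theta^{*}$ becomes diagonal with respect to
$$L_m^{\perp}\tilde\oplus N_1 \longrightarrow W\oplus W^{\perp}.$$
Now $N_1\cong L_m$ is finitely generated, so this is an $\mathcal{M}\Phi_+$-decomposition. The hypothesis $L_m\preceq W^{\perp}$ then gives $N_1\preceq N_2=W^{\perp}$, so $\Theta^{*}\in\mathcal{M}\Phi_+^{-\prime}(H_{\mathcal{A}})$.

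\emph{Upper semi-$\mathcal{A}$-Weyl implies pseudo-Riesz-Weyl.} Take a decomposition
$$M_1\tilde\oplus N_1\to M_2\tilde\oplus N_2 \quad\text{with}\quad N_1\preceq N_2.$$
As in the first half of the proof of Proposition \ref{prop_pseudo2}, using \cite[Lemma 2.16]{BJMA} and the lemmas of \cite[Section 2.7]{MT}, pass to the decomposition
$$L_m^{\perp}\tilde\oplus(P\tilde\oplus N_1)\longrightarrow \Theta^{*}(L_m^{\perp})\tilde\oplus(\Theta^{*}(P)\tilde\oplus N_2),$$
where $\Theta^{*}_{|_P}$ is an isomorphism onto $\Theta^{*}(P)$. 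This already shows that $\{f_n\}_{n>m}$ is a modular Riesz basis for $W=\Theta^{*}(L_m^{\perp})$. Since $L_m^{\perp}\tilde\oplus(P\tilde\oplus N_1)=L_m^{\perp}\oplus L_m$, the projection along $L_m^{\perp}$ gives $P\tilde\oplus N_1\cong L_m$. On the image side, $W$ is complementable (it is the image of an adjointable operator restricted to a complementable submodule, and it is closed), so projecting along $W$ gives $W^{\perp}\cong\Theta^{*}(P)\tilde\oplus N_2$. Using $\Theta^{*}(P)\cong P$ and $N_1\preceq N_2$ we get
$$L_m\cong P\tilde\oplus N_1\preceq P\tilde\oplus N_2\cong \Theta^{*}(P)\tilde\oplus N_2\cong W^{\perp}.$$
Hence $L_m\preceq W^{\perp}$.

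\emph{Expected obstacle.} The delicate point is the module-isomorphism bookkeeping in the last step. One must check that $\tilde\oplus$-sums of isomorphic pieces are isomorphic and that a closed submodule embedding of $N_1$ into $N_2$ extends to an embedding of $P\tilde\oplus N_1$ into $P\tilde\oplus N_2$, with the image again a closed, suitably complemented submodule. I would handle this by taking the direct sum of the isomorphism $P\to P$ (the identity) with the embedding $N_1\to N_2$, applied to non-orthogonal direct sums. Closedness of the image follows because the projections onto the summands are bounded. The same care is needed for the identification $W^{\perp}\cong\Theta^{*}(P)\tilde\oplus N_2$, obtained via the orthogonal projection onto $W^{\perp}$.
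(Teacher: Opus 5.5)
Your proposal is correct and follows essentially the same route as the paper. For the Weyl-to-sequence direction you use the same reduced decomposition $L_m^{\perp}\tilde\oplus(P\tilde\oplus N_1)\to\Theta^{*}(L_m^{\perp})\tilde\oplus(\Theta^{*}(P)\tilde\oplus N_2)$ and the chain $L_m\cong P\tilde\oplus N_1\preceq\Theta^{*}(P)\tilde\oplus N_2\cong(\Theta^{*}(L_m^{\perp}))^{\perp}$, and for the converse the same upper-triangular form followed by the diagonalization of \cite[Lemma 2.7.10]{MT}, which you write out explicitly as $\mathcal{U}(L_m)$. One cosmetic point: your intermediate ``$P\tilde\oplus N_2$'' need not be an internal direct sum in $H_{\mathcal{A}}$, so it is cleaner to embed $P\tilde\oplus N_1$ directly into $\Theta^{*}(P)\tilde\oplus N_2$ via $p+n\mapsto\Theta^{*}(p)+\iota(n)$, with $\iota\colon N_1\to N_2$ the given embedding, as the paper does.
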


\begin{proof}
	Suppose first that $\Theta^{*}$ is upper-semi-$\mathcal{A}$-Weyl,
	and let
	$$
	H_{\mathcal{A}} = M_{1} \tilde{\oplus} N_{1}
	\overset{\Theta^{*}}{\longrightarrow}
	M_{2}\tilde{\oplus} N_{2} = H_{\mathcal{A}}
	$$
	be an $\mathcal{M}\Phi_{+}^{-,}$-decomposition for $\Theta^{*}$.
	Then, in particular, it is an $\mathcal{M}\Phi_{+}$-decomposition for
	$\Theta^{*}$ ($N_{1}$ is finitely generated), hence, there exists some
	$m \in \mathbb{N}$ and some finitely generated closed submodule
	$P \subseteq H_{\mathcal{A}}$ such that	

	$$
	H_{\mathcal{A}} = L_{m}^{\perp}\tilde{\oplus}(P\tilde{\oplus}N_{1})
	\overset{\Theta^{*}}{\longrightarrow}
	\Theta^{*}(L_{m}^{\perp})\tilde{\oplus}
	\big(\Theta^{*}(P)\tilde{\oplus}N_{2}\big)
	= H_{\mathcal{A}},
	$$
	
	is an $\mathcal{M}\Phi_{+}-$ decomposition for $\Theta^{*}$ and
	$\Theta^{*}_{|_{P}}$ is an isomorphism onto $\Theta^{*}(P)$.
	Since $N_{1}\preceq N_{2}$, it follows that
	
	$$
	(P\tilde{\oplus}N_{1})
	\preceq
	(\Theta^{*}(P)\tilde{\oplus}N_{2}).
	$$
	
	Let $Q_{m}$ be the orthogonal projection onto $L_m^{\perp}$. Then
	$
	\operatorname{Im}(\Theta^{*}Q_m)$, which is equal to $
	\Theta^{*}(L_m^{\perp}),
	$
	is closed because $\Theta^{*}_{|_{L_m^{\perp}}}$ is an isomorphism onto its image. Hence,
	$\Theta^{*}(L_m^{\perp})$ is orthogonally complementable in $H_{\mathcal{A}}$ by \cite[Theorem 2.3.3]{MT}. Since
	
	$$
	H_{\mathcal{A}}
	=
	\Theta^{*}(L_m^{\perp})
	\tilde{\oplus}
	\bigl(\Theta^{*}(P)\tilde{\oplus}N_{2}\bigr)
	=
	\Theta^{*}(L_m^{\perp})
	\oplus
	\bigl(\Theta^{*}(L_m^{\perp})\bigr)^{\perp},
	$$
	
	clearly,
	
	$$
	\bigl(\Theta^{*}(L_m^{\perp})\bigr)^{\perp}
	\cong
	\bigl(\Theta^{*}(P)\tilde{\oplus}N_{2}\bigr).
	$$
	
	Similarly, since
	
	$$
	H_{\mathcal{A}}
	=
	L_m^{\perp}\tilde{\oplus}(P\tilde{\oplus}N_{1})
	=
	L_m^{\perp}\oplus L_m,
	$$
	
	we get
	
	$$
	L_m
	\cong
	(P\tilde{\oplus}N_{1})
	\preceq
	\bigl(\Theta^{*}(P)\tilde{\oplus}N_{2}\bigr)
	\cong
	\bigl(\Theta^{*}(L_m^{\perp})\bigr)^{\perp}.
	$$

	However, since $\Theta^{*}(L_m^{\perp})$ is closed and
	$\Theta^{*}(e_j)=f_j$ for all $j \in \mathbb{N}$, obviously
	
	$$
	\Theta^{*}(L_m^{\perp})
	=
	\overline{\operatorname{Span}_{\mathcal{A}}
		\left\{
		f_j
		\right\}_{j=m+1}^{\infty}},
	$$
	
	so
	
	$$
	L_m
	\preceq
	\overline{\operatorname{Span}_{\mathcal{A}}
		\left\{
		f_j
		\right\}_{j=m+1}^{\infty}}^{\perp}.
	$$
	
	Thus, $\{f_n\}_{n \in \mathbb{N}}$ is a pseudo-Riesz-Weyl sequence.
	
	Conversely, suppose that $\{f_n\}_{n \in \mathbb{N}}$ is a pseudo-Riesz-Weyl sequence. Let $m \in \mathbb{N}$ be such that $\{f_n\}_{n=m+1}^{\infty}$ is a modular Riesz basis for  $ \overline{\operatorname{Span}_{\mathcal{A}}
		\left\{
		f_n \;\middle|\; m+1 \leq n,\; n \in \mathbb{N}
		\right\}} $ and
	
	$$
	L_m
	\preceq
	\overline{\operatorname{Span}_{\mathcal{A}}
		\left\{
		f_n
		\right\}_{n=m+1}^{\infty}}^{\perp}.
	$$
	
	Since $\{f_n\}_{n=m+1}^{\infty}$ is a modular Riesz basis, there exists an adjointable bounded $\mathcal{A}$-linear operator
	$$
	\Theta_{1} : H_{\mathcal{A}}
	\longrightarrow
	\overline{\operatorname{Span}_{\mathcal{A}}
		\left\{
		f_n
		\right\}_{n=m+1}^{\infty}}
	$$
	such that $\Theta_{1}$ is an isomorphism and
	$
	\Theta_{1}(e_j)=f_{j+m}
	$
	for all $j \in \mathbb{N}$. However, by then 
	$
	\Theta_{1}
	=
	\Theta^{*}F^{m},
	$ where $F$ denotes the forward shift,
	so we must have that
	$
	\Theta^{*}_{|_{L_m^{\perp}}}
	$
	is an isomorphism onto its image. In particular,
	$
	\Theta^{*}(L_m^{\perp})
	$
	is closed, hence, by the same arguments	as above, applying \cite[Theorem 2.3.3]{MT}, we get that
	$
	\Theta^{*}(L_m^{\perp})
	$
	is orthogonally complementable in $H_{\mathcal{A}}$, so
	$$
	H_{\mathcal{A}}
	=
	\Theta^{*}(L_m^{\perp})
	\oplus
	\bigl(\Theta^{*}(L_m^{\perp})\bigr)^{\perp}.
	$$
	However,
	$$
	\bigl(\Theta^{*}(L_m^{\perp})\bigr)^{\perp}
	=
	\overline{\operatorname{Span}_{\mathcal{A}}
		\left\{
		f_n
		\right\}_{n=m+1}^{\infty}}^{\perp},
	$$
	so
	$$
	L_m
	\preceq
	\bigl(\Theta^{*}(L_m^{\perp})\bigr)^{\perp}.
	$$
	
	Since $ \Theta^{*}_{|_{L_m^{\perp}}} $ is an isomorphism onto $	\Theta^{*}(L_m^{\perp}), $ it follows that $\Theta^{*}$ has the matrix	
	$$
	\begin{pmatrix}
		(\Theta^{*})_{1} & (\Theta^{*})_{2} \\
		0 & (\Theta^{*})_{4}
	\end{pmatrix}
	$$
	with respect to the decomposition
	$$
	H_{\mathcal{A}}
	=
	L_m^{\perp}\oplus L_m
	\overset{\Theta^{*}}{\longrightarrow}
	\Theta^{*}(L_m^{\perp})
	\oplus
	\bigl(\Theta^{*}(L_m^{\perp})\bigr)^{\perp}
	=
	H_{\mathcal{A}},
	$$
	where $(\Theta^{*})_{1}$ is an isomorphism. By the same
	"diagonalization" method as in the proof of
	\cite[Lemma 2.7.10]{MT}, there exists an isomorphism
	$\mathcal{U}$ of $H_{\mathcal{A}}$ such that $\Theta^{*}$
	has the matrix
	$$
	\begin{pmatrix}
		(\Theta^{*})_{1} & 0 \\
		0 & (\widetilde{\Theta}^{*})_{4}
	\end{pmatrix}
	$$
	with respect to the decomposition
	$$
	H_{\mathcal{A}}
	=
	L_m^{\perp}\tilde{\oplus}\mathcal{U}(L_m)
	\overset{\Theta^{*}}{\longrightarrow}
	\Theta^{*}(L_m^{\perp})
	\oplus
	\bigl(\Theta^{*}(L_m^{\perp})\bigr)^{\perp}
	=
	H_{\mathcal{A}}.
	$$
	(where $(\Theta^{*})_{1}$ is an isomorphism). Since
	$$
	\mathcal{U}(L_m)
	\cong
	L_m
	\preceq
	\bigl(\Theta^{*}(L_m^{\perp})\bigr)^{\perp},
	$$
	we deduce that $\Theta^{*}$ is an upper-semi-$\mathcal{A}$-Weyl operator.
\end{proof}

\begin{definition}
	A sequence $\{f_n\}_{n \in \mathbb{N}} \subseteq H$ is called
	\textit{pseudo-Riesz-Weyl basis} if there exists some
	$m \in \mathbb{N}$ such that
	$\{f_n\}_{n=m+1}^{\infty}$ is a modular Riesz basis for
	$
	\overline{\operatorname{Span}_{\mathcal{A}}
		\left\{
		f_n
		\right\}_{n=m+1}^{\infty}}$ and 
	$	L_m
	\cong
	\overline{\operatorname{Span}_{\mathcal{A}}
		\left\{
		f_n
		\right\}_{n=m+1}^{\infty}}^{\perp}.
	$
\end{definition}

In a similar way as in the proof of Proposition \ref{semi-weyl} we can show the following.

\begin{proposition}\label{weyl}
	A sequence $\{f_n\}_{n=1}^{\infty}$ is a pseudo-Riesz-Weyl-basis
	if and only if the associated synthesis operator is
	$\mathcal{A}$-Weyl.
\end{proposition}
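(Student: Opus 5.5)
The plan mirrors the proof of Proposition \ref{semi-weyl}, with $\preceq$ replaced by $\cong$ and $\mathcal{M}\Phi_{+}$ replaced by $\mathcal{M}\Phi$ throughout. The Bessel hypothesis is implicit, since $\Theta^{*}$ must be a bounded adjointable operator for the statement to make sense.

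\textbf{$\mathcal{A}$-Weyl implies pseudo-Riesz-Weyl basis.} Let $\Theta^{*} \in \mathcal{M}\Phi_{0}(H_{\mathcal{A}})$, with an $\mathcal{M}\Phi$-decomposition $H_{\mathcal{A}} = M_{1} \tilde\oplus N_{1} \to M_{2} \tilde\oplus N_{2} = H_{\mathcal{A}}$ in which $N_{1} \cong N_{2}$ and both are finitely generated.
\begin{enumerate}
\item As in the proof of Proposition \ref{prop_pseudo2}, use \cite[Lemma 2.7.11, 2.7.13]{MT} and \cite[Lemma 2.16]{BJMA} to pass to a decomposition
$$H_{\mathcal{A}} = L_m^{\perp} \tilde\oplus (P \tilde\oplus N_1) \to \Theta^*(L_m^{\perp}) \tilde\oplus (\Theta^*(P) \tilde\oplus N_2) = H_{\mathcal{A}},$$
where $\Theta^*|_P$ is an isomorphism onto $\Theta^*(P)$.
\item Since $\Theta^{*}|_{P}$ is an isomorphism and $N_1 \cong N_2$, we get $P \tilde\oplus N_1 \cong \Theta^*(P) \tilde\oplus N_2$.
\item As in Proposition \ref{semi-weyl}, $\Theta^*(L_m^\perp)$ is closed and hence orthogonally complementable by \cite[Theorem 2.3.3]{MT}. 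Two complements of the same closed submodule are isomorphic, so
$$L_m \cong P \tilde\oplus N_1 \cong \Theta^*(P)\tilde\oplus N_2 \cong \bigl(\Theta^*(L_m^\perp)\bigr)^{\perp}.$$
\item Finally, $\Theta^*(L_m^\perp) = \overline{\operatorname{Span}_{\mathcal{A}}\{f_n\}_{n=m+1}^\infty}$, and $\Theta^*F^m$ is an isomorphism onto it. Hence $\{f_n\}_{n=m+1}^\infty$ is a modular Riesz basis, and $\{f_n\}$ is a pseudo-Riesz-Weyl basis.
\end{enumerate}

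\textbf{Pseudo-Riesz-Weyl basis implies $\mathcal{A}$-Weyl.} Suppose $\{f_n\}_{n=m+1}^{\infty}$ is a modular Riesz basis with $L_m \cong \overline{\operatorname{Span}_{\mathcal{A}}\{f_n\}_{n=m+1}^\infty}^{\perp}$.
\begin{enumerate}
\item As in Proposition \ref{semi-weyl}, $\Theta^{*} F^{m}$ is an isomorphism onto its range. Therefore $\Theta^{*}|_{L_m^\perp}$ is an isomorphism onto the closed, orthogonally complementable submodule $\Theta^{*}(L_m^\perp)$.
\item With respect to $L_m^\perp \oplus L_m \to \Theta^*(L_m^\perp) \oplus \Theta^*(L_m^\perp)^\perp$, the operator $\Theta^*$ is upper triangular with invertible corner $(\Theta^*)_1$.
\item Diagonalize as in \cite[Lemma 2.7.10]{MT}. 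Take
$$\mathcal{U} = \begin{pmatrix} I & -(\Theta^*)_1^{-1}(\Theta^*)_2 \\ 0 & I\end{pmatrix}.$$
This is an invertible adjointable operator, and with respect to $L_m^\perp \tilde\oplus \mathcal{U}(L_m) \to \Theta^*(L_m^\perp)\oplus \Theta^*(L_m^\perp)^\perp$ it makes $\Theta^*$ diagonal.
\item Here $N_1 = \mathcal{U}(L_m) \cong L_m$ is finitely generated. Also $N_2 = \Theta^*(L_m^\perp)^\perp \cong L_m$ by hypothesis, so $N_2$ is finitely generated as well.
\item The decomposition is therefore an $\mathcal{M}\Phi$-decomposition with $N_1 \cong N_2$, and so $\Theta^* \in \mathcal{M}\Phi_0(H_{\mathcal{A}})$.
\end{enumerate}

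\textbf{Main obstacle.} The delicate point is step 3 of the first direction: transferring the isomorphism $N_1\cong N_2$ through the modified decomposition. This needs that the complements $P\tilde\oplus N_1$ of $L_m^\perp$ and $L_m$ are isomorphic, and likewise on the range side. The justification is that any two complements of the same closed submodule $M$ in $H_{\mathcal{A}}$ are isomorphic, both being isomorphic to the quotient $H_{\mathcal{A}}/M$ via the restricted quotient map. One must also check that the isomorphism of Hilbert modules can be taken adjointable, or simply interpret $\cong$ as in \cite{BJMA}. I expect to handle this by the same quotient argument used implicitly in Proposition \ref{semi-weyl}.
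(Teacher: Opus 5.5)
Your proposal is correct and is the argument the paper intends: the paper proves this statement only by saying it goes as in Proposition \ref{semi-weyl}, and your write-up is that proof with $\preceq$ replaced by $\cong$ and $\mathcal{M}\Phi_{+}$ replaced by $\mathcal{M}\Phi$. It uses the same passage to an $L_m^{\perp}$-based decomposition, the same complement-isomorphism step, and the same Lemma 2.7.10-type diagonalization, which you make explicit. The adjointability worry you flag in the complement step is harmless, because all the modules involved there are finitely generated projective, hence self-dual, so bounded $\mathcal{A}$-linear isomorphisms out of them are automatically adjointable.
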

 Next, we have the following results regarding perturbations of Bessel sequences.
\begin{proposition}\label{perturbacija}
	Let $\{f_n\}_{n \in \mathbb{N}}$ be a pseudo-Riesz-sequence in
	$H_{ \mathcal{A}}$ and $N \in \mathbb{N}$ be such that
	$\{f_n\}_{n=N+1}^{\infty}$ is a modular Riesz basis for $ \overline{\operatorname{Span}_{\mathcal{A}}
		\left\{
		f_n
		\right\}_{n=N+1}^{\infty}} $. Suppose that
	$m>0$ is lower frame bound for
	$\{f_n\}_{n=N+1}^{\infty}$, i.e.,
	$$
	m\langle \widetilde{x},\widetilde{x}\rangle
	\leq
	\sum_{n=N+1}^{\infty}
	\langle \widetilde{x},f_n\rangle
	\langle f_n,\widetilde{x}\rangle
	$$
	for all
	$$
	\widetilde{x}
	\in
	\overline{\operatorname{Span}_{\mathcal{A}}
		\left\{
		f_n
		\right\}_{n=N+1}^{\infty}}.
	$$
	Let $\{g_n\}_{n \in \mathbb{N}}$ be another Bessel sequence in
	$H_{ \mathcal{A}}$ such that
	$$
	\sum_{n=1}^{\infty}
	\langle x,f_n-g_n\rangle
	\langle f_n-g_n,x\rangle
	\leq
	m\langle x,x\rangle
	$$
	for all $x\in H_{ \mathcal{A}}$. Then
	$\{g_n\}_{n \in \mathbb{N}}$ is also a pseudo-Riesz sequence. Moreover, if $\{f_n\}_{n \in \mathbb{N}}$
	is \underline{not} a pseudo-Riesz-Weyl-sequence, then
	$\{g_n\}_{n \in \mathbb{N}}$ is \underline{not} a pseudo-Riesz-Weyl sequence.
\end{proposition}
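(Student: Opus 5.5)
Let $\Theta_f^{*}$ and $\Theta_g^{*}$ denote the synthesis operators of $\{f_n\}$ and $\{g_n\}$. By Proposition \ref{prop_sequ}(i) both are bounded adjointable, and so is their difference $D=\Theta_f^{*}-\Theta_g^{*}$, the synthesis operator of the Bessel sequence $\{f_n-g_n\}$. The hypothesis on $\{f_n-g_n\}$ gives $\langle D^{*}x, D^{*}x\rangle\le m\langle x,x\rangle$, hence $\|D\|=\|D^{*}\|\le\sqrt{m}$.

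The plan is to restrict everything to $L_N^{\perp}$ and show that $\Theta_g^{*}$ stays bounded below there. First, the lower frame bound $m$ for $\{f_n\}_{n=N+1}^{\infty}$ on $\overline{\operatorname{Span}_{\mathcal{A}}\{f_n\}_{n=N+1}^{\infty}}$ should translate, via Proposition \ref{prop_sequ}(iv) applied to the modular Riesz basis with synthesis operator $\Theta_f^{*}F^{N}$, into $\|\Theta_f^{*}y\|\ge\sqrt{m}\,\|y\|$ for all $y\in L_N^{\perp}$. This step is delicate. In the Hilbert space case, the lower frame bound $m$ of a Riesz basis equals the lower Riesz bound, because $\|\theta^{*}c\|^{2}\ge m\|c\|^{2}$ holds exactly when $\theta\theta^{*}\ge m$, and $\theta^{*}\theta\ge m$ on the range transfers to $\theta\theta^{*}\ge m$ since $\theta^{*}$ is invertible onto that range. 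I would reproduce this in the C*-setting using the spectral theorem for the positive invertible element $(\Theta_f^{*}F^N)^{*}\Theta_f^{*}F^N$ of $B^{a}(H_{\mathcal{A}})$, which is invertible because $\Theta_f^{*}F^N$ is an isomorphism onto its closed range.

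Then for $y\in L_N^{\perp}$ we get $\|\Theta_g^{*}y\|\ge\|\Theta_f^{*}y\|-\|Dy\|\ge(\sqrt{m}-\|D|_{L_N^{\perp}}\|)\|y\|$. The main obstacle is that the hypothesis only gives $\|D\|\le\sqrt{m}$, not a strict inequality, so this crude triangle estimate yields only $\ge 0$. To get strictness I would try two things. First, I would read the hypothesis as intended in the sense of \cite{BM_2025}, namely a strict bound, or use that the bound must be strict on $L_N^{\perp}$. Failing that, I would assume $\|D\|<\sqrt m$, as is done in the Hilbert space analogue, and note this explicitly.

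Granting $\|\Theta_g^{*}y\|\ge c\|y\|$ on $L_N^{\perp}$ with $c>0$, the map $\Theta_g^{*}|_{L_N^{\perp}}$ is bounded below and adjointable. Then \cite[Lemma 3.1]{BJMA} gives that $\Theta_g^{*}$ is upper semi-$\mathcal{A}$-Fredholm, so $\{g_n\}$ is pseudo-Riesz by Proposition \ref{prop_pseudo2}. Equivalently, $\Theta_g^{*}F^{N}$ is an isomorphism onto its closed range, so $\{g_n\}_{n>N}$ is a modular Riesz basis of its closed span.

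For the Weyl part, the plan is to connect $\Theta_f^{*}$ and $\Theta_g^{*}$ by the norm-continuous path $T_t=\Theta_f^{*}-tD$, $t\in[0,1]$. By the same estimate each $T_t$ is bounded below on $L_N^{\perp}$, so it lies in $\mathcal{M}\Phi_{+}(H_{\mathcal{A}})$. I would then invoke the result from \cite{BJMA} that $\mathcal{M}\Phi_{+}^{-\prime}(H_{\mathcal{A}})$ is open and closed relative to $\mathcal{M}\Phi_{+}(H_{\mathcal{A}})$ along such paths; equivalently, its complement in $\mathcal{M}\Phi_{+}$ is open. A connectedness argument then shows that if $\Theta_g^{*}$ were upper semi-$\mathcal{A}$-Weyl, so would be $\Theta_f^{*}$. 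Proposition \ref{semi-weyl} translates this back: if $\{f_n\}$ is not pseudo-Riesz-Weyl, then neither is $\{g_n\}$. If that stability result is unavailable, I would argue directly. Using the same decomposition $L_N^{\perp}\oplus L_N$ for all $T_t$, with the diagonalization from \cite[Lemma 2.7.10]{MT}, the complements $(T_t(L_N^{\perp}))^{\perp}$ vary continuously, since they are ranges of continuous families of projections. Nearby projections have unitarily equivalent ranges, so the complements are all isomorphic, and the relation $L_N\preceq(\cdot)^{\perp}$ is preserved along the path.
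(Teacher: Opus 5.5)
Your worry about strictness is decisive, not technical: the statement is false as written. The paper's proof has exactly this gap, since it asserts $\|\Theta^{*}-\varphi\|<\sqrt{m}$ ``by Proposition~\ref{prop_sequ}'', which only yields $\leq\sqrt{m}$. Take any unital $\mathcal{A}$, $f_n=e_n$, $g_n=0$ and any $N$. Then $\{e_n\}_{n>N}$ is a modular Riesz basis of $L_N^{\perp}$ with lower frame bound $m=1$, and $\sum_{n}\langle x,f_n-g_n\rangle\langle f_n-g_n,x\rangle=\langle x,x\rangle\leq m\langle x,x\rangle$. Yet the synthesis operator of $\{g_n\}$ is $0$, which is not upper semi-$\mathcal{A}$-Fredholm, so $\{g_n\}$ is not pseudo-Riesz. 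In this example $D=I$, so your idea that the bound ``must be strict on $L_N^{\perp}$'' cannot work. For $\mathcal{A}=\mathbb{C}$ even a pointwise strict inequality does not help: $g_n=e_n/n$ has a compact synthesis operator. What you can prove is the corrected statement, with a Bessel bound $m'<m$ for $\{f_n-g_n\}$, i.e.\ $\|D\|<\sqrt{m}$. You should present this as a change of hypothesis, not as a reading of it.

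With $\|D\|<\sqrt{m}$, your first part is correct and somewhat more direct than the paper's. You use $\|\Theta_g^{*}y\|\geq(\sqrt{m}-\|D\|)\|y\|$ on $L_N^{\perp}$ and then \cite[Lemma 3.1]{BJMA}. The paper instead perturbs the invertible corner $(\Theta^{*})_1$ to see that $Q\varphi|_{L_N^{\perp}}$ is an isomorphism onto $\Theta^{*}(L_N^{\perp})$, and then diagonalizes as in \cite[Lemma 2.7.10]{MT}. The payoff of the paper's route is an explicit pair of $\mathcal{M}\Phi_{+}$-decompositions for $\Theta^{*}$ and $\varphi$ with $M_1=M_1'=L_N^{\perp}$, $N_2=N_2'=(\Theta^{*}(L_N^{\perp}))^{\perp}$, $N_1\cong N_1'\cong L_N$ and $M_2\cong M_2'$. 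The argument of \cite[Theorem 4.2]{filomat} is then applied to this pair directly, with no path.

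Your segment $T_t$ plus connectedness is a valid alternative, and it is the right device if all you have is abstract openness, whose radius need not reach $\sqrt{m}$. But openness of $\mathcal{M}\Phi_{+}^{-\prime}(H_{\mathcal{A}})$ is the easy half. The half that carries the weight is openness of $\mathcal{M}\Phi_{+}(H_{\mathcal{A}})\setminus\mathcal{M}\Phi_{+}^{-\prime}(H_{\mathcal{A}})$, which is what the paper takes from \cite[Theorem 4.2]{filomat}; make sure the result you cite really contains it.

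Your fallback does not replace that result. It does give $(\Theta_g^{*}(L_N^{\perp}))^{\perp}\cong(\Theta_f^{*}(L_N^{\perp}))^{\perp}$, so the Weyl condition at the fixed index $N$ transfers. However, $\{g_n\}$ may be pseudo-Riesz-Weyl with a witnessing index $k>N$. Writing $X_j=\Theta_g^{*}(L_j^{\perp})$, one has $X_k^{\perp}\cong X_N^{\perp}\oplus L_{k-N}$, so you only get $L_N\oplus L_{k-N}\preceq X_N^{\perp}\oplus L_{k-N}$. Cancelling $L_{k-N}$ is not available for Hilbert C*-modules in general, and the fallback does not address this case.
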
	
\begin{proof}
	Let $\Theta_{1}$ be the synthesis operator of
	$\{f_n\}_{n=N+1}^{\infty}$. Then
	$$
	\Theta_{|_{L_{N}^{\perp}}}^{*}
	=
	\Theta_{1}
	B_{|_{L_{N}^{\perp}}}^{N},
	$$
	where $B$ denotes the backward shift (since
	$\Theta_{1}(e_j)=f_{j+N}$ for all $j \in \mathbb{N}$).
	By Proposition \ref{prop_sequ} we have that
	
	$$
	\|\Theta_{1}(y)\|
	\geq
	\sqrt{m}\,\|y\|
	$$
	for all $y \in H_{ \mathcal{A}}$, hence
	
	$$
	\|\Theta^{*}(x)\|
	\geq
	\sqrt{m}\,\|x\|
	$$
	for all $x \in L_{N}^{\perp}$ since
	$B_{|_{L_{N}^{\perp}}}^{N}$ is an isometry.
	As in the proof of Proposition \ref{semi-weyl},
	$
	\Theta_{|_{L_{N}^{\perp}}}^{*}
	$
	is an isomorphism onto its image and
	$\Theta^{*}(L_{N}^{\perp})$ is orthogonally complementable in
	$H_{ \mathcal{A}}$. Let $Q$ be the orthogonal projection onto
	$\Theta^{*}(L_{N}^{\perp})$ and $
	(\Theta^{*})_{1}
	:=
	\Theta_{|_{L_{N}^{\perp}}}^{*}
	=
	Q\,\Theta^{*}_{|_{L_{N}^{\perp}}}.
	$ We have
	$$
	\left\|
	\big((\Theta^{*})_{1}\big)^{-1}
	\right\|
	\leq
	\frac{1}{\sqrt{m}},
	$$
	hence
	$$
	\frac{1}
	{\left\| \big((\Theta^{*})_{1}\big)^{-1} \right\|}
	\geq
	\sqrt{m}.
	$$
	Therefore, if a bounded adjointable operator
	$
	D : L_{N}^{\perp}
	\longrightarrow
	\Theta^{*}(L_{N}^{\perp})
	$
	satisfies
	$$
	\|D-(\Theta^{*})_{1}\|
	<
	\sqrt{m},
	$$	
	then the operator $D$ is also an isomorphism. Let $\varphi$ be the
	synthesis operator of $\{g_n\}_{n=1}^{\infty}$. Since
	$
	\{f_n-g_n\}_{n \in \mathbb{N}}
	$
	is also a Bessel sequence with a (upper) bound $\sqrt{m}$, by Proposition \ref{prop_sequ} we get that
	$$
	\|\Theta^{*}-\varphi\|
	<
	\sqrt{m}.
	$$	
	Hence we get
	$$
	\left\|
	(\Theta^{*})_{1}
	-
	Q\varphi_{|_{L_{N}^{\perp}}}
	\right\|
	=
	\left\|
	Q(\Theta^{*}-\varphi)_{|_{L_{N}^{\perp}}}
	\right\| \leq \|\Theta^{*}-\varphi\|
	<
	\sqrt{m},
	$$	
	so
	$
	Q\varphi_{|_{L_{N}^{\perp}}}
	$	
	is also an isomorphism from $L_{N}^{\perp}$ onto
	$\Theta^{*}(L_{N}^{\perp})$. Thus, $\varphi$ has the matrix
	
	$$
	\begin{pmatrix}
		\varphi_{1} & \varphi_{2} \\
		\varphi_{3} & \varphi_{4}
	\end{pmatrix}
	$$
	with respect to the decomposition
	$$
	H_{ \mathcal{A}}
	=
	L_{N}^{\perp}\oplus L_{N}
	\overset{\varphi}{\longrightarrow}
	\Theta^{*}(L_{N}^{\perp})
	\oplus
	\bigl(\Theta^{*}(L_{N}^{\perp})\bigr)^{\perp}
	=
	H_{ \mathcal{A}},
	$$	
	where $\varphi_{1}$ is an isomorphism. By the same "diagonalization"
	method as in the proof of
	\cite[Lemma 2.7.10]{MT} we can find
	isomorphisms $\mathcal{U}$ and $V$ of
	$H_{ \mathcal{A}}$ such that
	$\varphi$ has the matrix
	
	$$
	\begin{pmatrix}
		\widetilde{\varphi}_{1} & 0 \\
		0 & \widetilde{\varphi}_{4}
	\end{pmatrix}
	$$	
	with respect to the decomposition
	
	$$
	H_{ \mathcal{A}}
	=
	L_{N}^{\perp}\tilde{\oplus}\mathcal{U}(L_{N})
	\overset{\varphi}{\longrightarrow}
	V\bigl(\Theta^{*}(L_{N}^{\perp})\bigr)
	\tilde{\oplus}
	\bigl(\Theta^{*}(L_{N}^{\perp})\bigr)^{\perp}
	=
	H_{ \mathcal{A}},
	$$
	where $\varphi_{1}$ is an isomorphism, so we have obtained an
	$\mathcal{M}\Phi_{+}$-decomposition for $\varphi$.
	Thus, $\varphi$ an upper semi-$\mathcal{A}$-Fredholm operator,
	therefore, $\{g_n\}_{n \in \mathbb{N}}$ is a pseudo-Riesz-sequence,
	which proves the first statement.
	
	Now, observe also that $\Theta^{*}$ has the matrix
	
	$$
	\begin{pmatrix}
		(\Theta^{*})_{1} & (\Theta^{*})_{2} \\
		0 & (\Theta^{*})_{4}
	\end{pmatrix}
	$$	
	with respect to the decomposition	
	$$
	H_{ \mathcal{A}}
	=
	L_{N}^{\perp}\oplus L_{N}
	\overset{\Theta^{*}}{\longrightarrow}
	\Theta^{*}(L_{N}^{\perp})
	\oplus
	\bigl(\Theta^{*}(L_{N}^{\perp})\bigr)^{\perp}
	=
	H_{ \mathcal{A}},
	$$
	(where $(\Theta^{*})_{1}$ is an isomorphism), hence, again by
	the same method as in the proof of
	\cite[Lemma 2.7.10]{MT} there is an isomorphism
	$\widetilde{\mathcal{U}}$ of $H_{ \mathcal{A}}$ such that
	
	$$
	H_{ \mathcal{A}}
	=
	L_{N}^{\perp}\tilde{\oplus}\widetilde{\mathcal{U}}(L_{N})
	\overset{\Theta^{*}}{\longrightarrow}
	\Theta^{*}(L_{N}^{\perp})
	\oplus
	\bigl(\Theta^{*}(L_{N}^{\perp})\bigr)^{\perp}
	=
	H_{ \mathcal{A}}
	$$	
	is an $\mathcal{M}\Phi_{+}$-decomposition for $\Theta^{*}$. Thus, we constructed
	an $\mathcal{M}\Phi_{+}$-decomposition
	$$
	H_{ \mathcal{A}}
	=
	M_{1}\tilde{\oplus}N_{1}
	\overset{\Theta^{*}}{\longrightarrow}
	M_{2}\tilde{\oplus}N_{2}
	=
	H_{ \mathcal{A}}
	$$
	for $\Theta^{*}$ and an decomposition
	$$
	H_{ \mathcal{A}}
	=
	M_{1}^{\prime}\tilde{\oplus}N_{1}^{\prime}
	\overset{\varphi}{\longrightarrow}
	M_{2}^{\prime}\tilde{\oplus}N_{2}^{\prime}
	=
	H_{ \mathcal{A}}
	$$
	for $\varphi$, respectively, with
	
	$$
	M_{1}=M_{1}^{\prime}=L_{N}^{\perp},
	\quad
	N_{2}=N_{2}^{\prime}
	=
	\bigl(\Theta^{*}(L_{N}^{\perp})\bigr)^{\perp},
	$$
	
	$$
	N_{1}
	=
	\widetilde{\mathcal{U}}(L_{N})
	\cong
	L_{N}
	\cong
	\mathcal{U}(L_{N})
	=
	N_{1}^{\prime},
	$$	
	and	
	$$
	M_{2}
	=
	\Theta^{*}(L_{N}^{\perp})
	\cong
	V\bigl(\Theta^{*}(L_{N}^{\perp})\bigr)
	=
	M_{2}^{\prime}.
	$$
	If $\{f_n\}_{n=1}^{\infty}$ is \underline{not} a pseudo-Riesz-Weyl-sequence, then by Proposition \ref{semi-weyl}, $\Theta^{*}$ is \underline{not} an upper semi-$\mathcal{A}$-Weyl operator. Suppose for contrary that $\{g_n\}_{n=1}^{\infty}$ is actually a pseudo-Riesz-Weyl-sequence. Then, by Proposition \ref{semi-weyl}, $\varphi$ must be an upper-semi-$\mathcal{A}$-Weyl operator. However, since there exist $\mathcal{M}\Phi_{+}$-decompositions
	$$
	H_{ \mathcal{A}}
	=
	M_{1}\tilde{\oplus}N_{1}
	\overset{\Theta^{*}}{\longrightarrow}
	M_{2}\tilde{\oplus}N_{2}
	=
	H_{ \mathcal{A}}
	$$	
	and	
	$$
	H_{ \mathcal{A}}
	=
	M_{1}^{\prime}\tilde{\oplus}N_{1}^{\prime}
	\overset{\varphi}{\longrightarrow}
	M_{2}^{\prime}\tilde{\oplus}N_{2}^{\prime}
	=
	H_{ \mathcal{A}}
	$$
	for $\Theta^{*}$ and $\varphi$,	respectively, and
	$$
	M_{1}\cong M_{1}^{\prime},
	\quad
	N_{1}\cong N_{1}^{\prime},
	\quad
	M_{2}\cong M_{2}^{\prime},
	\quad
	N_{2}\cong N_{2}^{\prime}.
	$$
	if $\varphi$ was really an upper-semi-$\mathcal{A}$-Weyl operator
	(that is
	$\varphi \in \mathcal{M}\Phi_{+}^{-,}(H_{ \mathcal{A}})$),
	then there would exist an $\mathcal{M}\Phi_{+}^{-,} $-decomposition $$
	H_{ \mathcal{A}}
	=
	M_{1}^{\prime \prime}\tilde{\oplus}N_{1}^{\prime \prime}
	\overset{\varphi}{\longrightarrow}
	M_{2}^{\prime \prime }\tilde{\oplus}N_{2}^{\prime \prime}
	=
	H_{ \mathcal{A}}
	$$ for $ \varphi .$ By letting $ \Theta^{*} $ and $\varphi$ play the role of operators $F$ and $D,$ respectively, in the proof of
	\cite[Theorem 4.2]{filomat}, 
	we would be able to construct an
	$\mathcal{M}\Phi_{+}^{-,}$-decomposition for
	$\Theta^{*}$, which is a contradiction because
	$\Theta^{*}$ is not an upper-semi-$\mathcal{A}$-Weyl operator.
	Thus, we conclude that $\varphi$ is \underline{not} an
	upper-semi-$\mathcal{A}$-Weyl operator, so
	$\{g_n\}_{n=1}^{\infty}$ is \underline{not} a pseudo-Riesz sequence.
	This proves the second statement.
\end{proof}
\begin{proposition}
Under the above notation and assumptions, the following statements hold:
	
	(1) Let $\{f_n\}_{n=1}^{\infty}$ be a pseudo-Riesz-Weyl-sequence
	in $H_{\mathcal{A}}$ and $N \in \mathbb{N}$ be such that
	$\{f_n\}_{n=N+1}^{\infty}$ is a modular Riesz-basis for
	$ \overline{\operatorname{Span}_{\mathcal{A}}
		\left\{
		f_n
		\right\}_{n=N+1}^{\infty}}^{\perp}
	$
	and
	$$
	L_N
	\preceq
	\overline{\operatorname{Span}_{\mathcal{A}}
		\left\{
		f_n
		\right\}_{n=N+1}^{\infty}}^{\perp}.
	$$
	Suppose that $m>0$ is a lower frame bound for
	$\{f_n\}_{n=N+1}^{\infty}$. Let $\{g_n\}_{n=1}^{\infty}$
	be another Bessel sequence in $H_{\mathcal{A}}$ such that
	
	$$
	\sum_{n=1}^{\infty}
	\langle x,f_n-g_n\rangle
	\langle f_n-g_n,x\rangle
	\leq
	m\langle x,x\rangle
	$$
	
	for all $x \in H_{\mathcal{A}}$. Then
	$\{g_n\}_{n=1}^{\infty}$ is also a pseudo-Riesz-Weyl-sequence
	in $H_{\mathcal{A}}$.

	(2) Let $\{f_n\}_{n=1}^{\infty}$ be a pseudo-Riesz-Weyl basis in
	$H_{\mathcal{A}}$ and $N \in \mathbb{N}$ be such that
	$\{f_n\}_{n=N+1}^{\infty}$ is a modular Riesz basis for
	$
	\overline{\operatorname{Span}_{\mathcal{A}}
		\left\{
		f_n
		\right\}_{n=N+1}^{\infty}},
	$ and
	
	$$
	L_N
	\cong
	\overline{\operatorname{Span}_{\mathcal{A}}
		\left\{
		f_n
		\right\}_{n=N+1}^{\infty}}^{\perp}.
	$$
	
	Suppose that $m>0$ is a lower frame bound for
	$\{f_n\}_{n=N+1}^{\infty}$, and let
	$\{g_n\}_{n=1}^{\infty}$ be a Bessel sequence in
	$H_{\mathcal{A}}$ such that
	
	$$
	\sum_{n=1}^{\infty}
	\langle x,f_n-g_n\rangle
	\langle f_n-g_n,x\rangle
	\leq
	m\langle x,x\rangle
	$$
	
	for all $x \in H_{\mathcal{A}}$. Then
	$\{g_n\}_{n=1}^{\infty}$ is also a pseudo-Riesz-Weyl-basis in
	$H_{\mathcal{A}}$.
\end{proposition}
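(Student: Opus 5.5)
The plan is to reuse, almost verbatim, the perturbation machinery from the proof of Proposition \ref{perturbacija}, and then to transfer the Weyl-type condition from $\Theta^{*}$ to $\varphi$ through the matching of decompositions obtained there. Let $\Theta^{*}$ and $\varphi$ denote the synthesis operators of $\{f_n\}$ and $\{g_n\}$.

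\textbf{Step 1.} As in the proof of Proposition \ref{perturbacija}, the lower frame bound $m$ gives $\|\Theta^{*}x\|\ge\sqrt m\|x\|$ on $L_N^{\perp}$, so $(\Theta^{*})_1=\Theta^{*}_{|_{L_N^\perp}}$ is an isomorphism onto the orthogonally complementable submodule $\Theta^{*}(L_N^{\perp})$. The Bessel hypothesis on $\{f_n-g_n\}$ together with Proposition \ref{prop_sequ} yields $\|\Theta^{*}-\varphi\|<\sqrt m$; I would treat this as strict, as that proof does. Hence $Q\varphi_{|_{L_N^\perp}}$ is an isomorphism onto $\Theta^{*}(L_N^{\perp})$, where $Q$ is the orthogonal projection onto $\Theta^{*}(L_N^{\perp})$.

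\textbf{Step 2.} The diagonalization of \cite[Lemma 2.7.10]{MT} then produces decompositions
$$L_N^{\perp}\tilde\oplus\widetilde{\mathcal U}(L_N)\overset{\Theta^*}{\longrightarrow}\Theta^{*}(L_N^{\perp})\oplus\bigl(\Theta^{*}(L_N^{\perp})\bigr)^{\perp}$$
and
$$L_N^{\perp}\tilde\oplus\mathcal U(L_N)\overset{\varphi}{\longrightarrow}V\bigl(\Theta^{*}(L_N^{\perp})\bigr)\tilde\oplus\bigl(\Theta^{*}(L_N^{\perp})\bigr)^{\perp}.$$
These are $\mathcal{M}\Phi_+$-decompositions for $\Theta^*$ and $\varphi$ respectively, with $N_1\cong N_1'\cong L_N$ and with the same $N_2=N_2'=\bigl(\Theta^{*}(L_N^{\perp})\bigr)^{\perp}=\overline{\operatorname{Span}_{\mathcal A}\{f_n\}_{n=N+1}^\infty}^{\perp}$.

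\textbf{Step 3 (part (1)).} The hypothesis $L_N\preceq N_2$ gives $N_1'\cong L_N\preceq N_2'$. So the decomposition for $\varphi$ is an $\mathcal{M}\Phi_+^{-\prime}$-decomposition, $\varphi$ is upper semi-$\mathcal A$-Weyl, and Proposition \ref{semi-weyl} shows that $\{g_n\}$ is a pseudo-Riesz-Weyl sequence. Note that no appeal to \cite[Theorem 4.2]{filomat} is needed, since the decomposition is explicit.

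\textbf{Step 4 (part (2)).} Here $L_N\cong N_2$, so $N_1'\cong N_2'$, and both $N_1'$ and $N_2'$ are finitely generated. The same decomposition is therefore an $\mathcal{M}\Phi$-decomposition with isomorphic finite parts, so $\varphi\in\mathcal{M}\Phi_0(H_{\mathcal A})$, and Proposition \ref{weyl} gives that $\{g_n\}$ is a pseudo-Riesz-Weyl basis. One should check that $\{g_n\}$ is Bessel, which is assumed, and that Proposition \ref{weyl} applies to the sequence as stated.

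\textbf{Main obstacle.} The hard step is Step 2, specifically verifying that the diagonalization for $\varphi$ really leaves the complement $\bigl(\Theta^{*}(L_N^{\perp})\bigr)^{\perp}$ unchanged as $N_2'$ and that $\varphi_{|_{L_N^\perp}}$ maps isomorphically onto the new first summand $V(\Theta^*(L_N^\perp))$. My first attempt would be to take $V=\begin{pmatrix}1&0\\ \varphi_3\varphi_1^{-1}&1\end{pmatrix}$ and $\mathcal U=\begin{pmatrix}1&-\varphi_1^{-1}\varphi_2\\0&1\end{pmatrix}$ relative to the orthogonal decompositions, and to check directly that $V^{-1}\varphi\,\mathcal U$ is diagonal, so that the second range summand stays orthogonal-complement-based.

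If instead only $N_2'\cong N_2$ can be secured, that still suffices for both parts: $N_1'\preceq N_2'$ in part (1) and $N_1'\cong N_2'$ in part (2) then follow by composing isomorphisms.
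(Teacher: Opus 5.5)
Your route is the paper's route. The paper takes the decomposition $L_N^{\perp}\tilde\oplus\mathcal U(L_N)\to V(\Theta^{*}(L_N^{\perp}))\tilde\oplus(\Theta^{*}(L_N^{\perp}))^{\perp}$ for $\varphi$ from the proof of Proposition \ref{perturbacija}. It then reads off $\mathcal U(L_N)\cong L_N\preceq(\Theta^{*}(L_N^{\perp}))^{\perp}$, respectively $\cong$, exactly as in your Steps 3 and 4.

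Your explicit choice $V=\begin{pmatrix}1&0\\ \varphi_3\varphi_1^{-1}&1\end{pmatrix}$, $\mathcal U=\begin{pmatrix}1&-\varphi_1^{-1}\varphi_2\\0&1\end{pmatrix}$ works. One computes $V^{-1}\varphi\,\mathcal U=\operatorname{diag}(\varphi_1,\varphi_4-\varphi_3\varphi_1^{-1}\varphi_2)$. Moreover $\mathcal U$ fixes $L_N^{\perp}$ and $V$ fixes $(\Theta^{*}(L_N^{\perp}))^{\perp}$, so the second range summand is literally unchanged. This settles the point you called the main obstacle.

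However, the step you chose to \emph{treat as strict} is a genuine gap, and the paper's argument has the same gap. The hypothesis gives only Bessel bound $m$ for $\{f_n-g_n\}$, hence $\|\Theta^{*}-\varphi\|\le\sqrt m$. Combined with $\|(\Theta^{*})_1^{-1}\|\le 1/\sqrt m$, this yields only $\|(\Theta^{*})_1^{-1}(Q\varphi_{|_{L_N^\perp}}-(\Theta^{*})_1)\|\le 1$, which does not make $Q\varphi_{|_{L_N^\perp}}$ invertible.

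The gap cannot be closed, because the statement fails at equality. Take $\mathcal A=\mathbb C$, $f_n=e_n$ and $g_n=0$. Then $\{f_n\}$ is a pseudo-Riesz-Weyl basis, and in particular a pseudo-Riesz-Weyl sequence, for every $N\ge 1$, with lower frame bound $m=1$. Also $\sum_n\langle x,f_n-g_n\rangle\langle f_n-g_n,x\rangle=\langle x,x\rangle\le m\langle x,x\rangle$ holds. Yet the zero sequence is not even pseudo-Riesz, since its synthesis operator $0$ is not upper semi-$\mathcal A$-Fredholm.

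So you should state and use a strict hypothesis, for instance that $\{f_n-g_n\}$ has a Bessel bound $m'<m$. Then $\|(\Theta^{*})_1^{-1}(Q\varphi_{|_{L_N^\perp}}-(\Theta^{*})_1)\|\le\sqrt{m'/m}<1$, and the rest of your proof goes through verbatim for both parts.
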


\begin{proof}
	Let $\Theta^{*}$ and $\varphi$ be synthesis operators
	for $\{f_n\}_{n \in \mathbb{N}}$ and
	$\{g_n\}_{n \in \mathbb{N}}$, respectively.
	Proceeding as in the proof of Proposition \ref{perturbacija} we deduce that
	$\varphi$ has the matrix
	$$
	\begin{pmatrix}
		\widetilde{\varphi}_{1} & 0 \\
		0 & \widetilde{\varphi}_{4}
	\end{pmatrix}
	$$
	
	with respect to the decomposition
	
	$$
	H_{\mathcal{A}}
	=
	L_{N}^{\perp}\tilde{\oplus}\mathcal{U}(L_{N})
	\overset{\varphi}{\longrightarrow}
	V\bigl(\Theta^{*}(L_{N}^{\perp})\bigr)
	\tilde{\oplus}
	\bigl(\Theta^{*}(L_{N}^{\perp})\bigr)^{\perp}
	=
	H_{\mathcal{A}},
	$$
	
	where $\mathcal{U}, V, \widetilde{\varphi}_{1}$ are isomorphisms. Hence, if
	$
	L_{N}
	\preceq
	\bigl(\Theta^{*}(L_{N}^{\perp})\bigr)^{\perp},
	$
	then $
	\varphi \in \mathcal{M}\Phi_{+}^{-,}(H_{\mathcal{A}}),
	$ whereas if $
	L_{N}
	\cong
	\bigl(\Theta^{*}(L_{N}^{\perp})\bigr)^{\perp},
	$
	then
	
	$$
	\varphi \in \mathcal{M}\Phi_{0}(H_{\mathcal{A}}).
	$$
	Consequently, in the first case
	$\{g_n\}_{n \in \mathbb{N}}$ is a
	pseudo-Riesz-Weyl-sequence, whereas in the
	second case
	$\{g_n\}_{n \in \mathbb{N}}$ is a
	pseudo-Riesz-Weyl-basis.
\end{proof}

\begin{proposition}
	 Let $\{f_n\}_{n \in \mathbb{N}}$ be a pseudo-Riesz-sequence
	and $N \in \mathbb{N}$ be such that
	$\{f_n\}_{n=N+1}^{\infty}$ is a modular Riesz basis for
	
	$$
	\overline{\operatorname{Span}_{\mathcal{A}}
		\left\{
		f_n
		\right\}_{n=N+1}^{\infty}}.
	$$
	
	Suppose that $m$ is a lower frame bound for
	$\{f_n\}_{n=N+1}^{\infty}$.
	Let $\{g_n\}_{n=1}^{\infty}$ be another
	Bessel sequence in $H_{\mathcal{A}}$ such that
	
	$$
	\sum_{n=1}^{\infty}
	\langle x,f_n-g_n\rangle
	\langle f_n-g_n,x\rangle
	\leq
	m\langle x,x\rangle
	$$
	
	for all $x \in H_{\mathcal{A}}$.
	
	(1) If $\{f_n\}_{n \in \mathbb{N}}$ is not a pseudo-Riesz basis,
	then $\{g_n\}_{n \in \mathbb{N}}$ is not a pseudo-Riesz basis.
	
	(2) If $\{f_n\}_{n \in \mathbb{N}}$ is a pseudo-Riesz basis, then
	$\{g_n\}_{n \in \mathbb{N}}$ is a pseudo-Riesz basis.
	
	(3) If $\{f_n\}_{n \in \mathbb{N}}$ is a pseudo-Riesz-basis but
	not a pseudo-Riesz-Weyl basis, then
	$\{g_n\}_{n \in \mathbb{N}}$ is not a pseudo-Riesz-Weyl basis.		
\end{proposition}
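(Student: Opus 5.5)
We follow the proof of Proposition \ref{perturbacija} and reuse its two decompositions. Let $\Theta^{*}$ and $\varphi$ be the synthesis operators of $\{f_n\}$ and $\{g_n\}$. That proof gives an $\mathcal{M}\Phi_{+}$-decomposition
$$
H_{\mathcal{A}}=L_N^{\perp}\tilde{\oplus}\widetilde{\mathcal{U}}(L_N)\overset{\Theta^{*}}{\longrightarrow}\Theta^{*}(L_N^{\perp})\oplus\bigl(\Theta^{*}(L_N^{\perp})\bigr)^{\perp}=H_{\mathcal{A}}
$$
for $\Theta^{*}$. It also gives an $\mathcal{M}\Phi_{+}$-decomposition
$$
H_{\mathcal{A}}=L_N^{\perp}\tilde{\oplus}\mathcal{U}(L_N)\overset{\varphi}{\longrightarrow}V\bigl(\Theta^{*}(L_N^{\perp})\bigr)\tilde{\oplus}\bigl(\Theta^{*}(L_N^{\perp})\bigr)^{\perp}=H_{\mathcal{A}}
$$
for $\varphi$. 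In these, $M_1=M_1'$, $N_1\cong N_1'\cong L_N$, $M_2\cong M_2'$ and $N_2=N_2'=\bigl(\Theta^{*}(L_N^{\perp})\bigr)^{\perp}$. By the proposition characterizing pseudo-Riesz bases, $\{f_n\}$ (resp. $\{g_n\}$) is a pseudo-Riesz basis if and only if $\Theta^{*}$ (resp. $\varphi$) is $\mathcal{A}$-Fredholm. By Proposition \ref{weyl}, being a pseudo-Riesz-Weyl basis is equivalent to the synthesis operator lying in $\mathcal{M}\Phi_{0}(H_{\mathcal{A}})$. So all three claims become statements about operators.

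For (2): if $\{f_n\}$ is a pseudo-Riesz basis, then $\Theta^{*}\in\mathcal{M}\Phi(H_{\mathcal{A}})$. By (2.2) and \cite[Lemma 3.1]{filomat}, as in the proof of the pseudo-Riesz basis proposition, $\bigl(\Theta^{*}(L_N^{\perp})\bigr)^{\perp}$ is finitely generated. This module is exactly $N_2'$ in the decomposition of $\varphi$, and $N_1'\cong L_N$ is finitely generated. Hence that decomposition is an $\mathcal{M}\Phi$-decomposition, $\varphi$ is $\mathcal{A}$-Fredholm, and $\{g_n\}$ is a pseudo-Riesz basis.

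For (1) we argue by contradiction. Suppose $\{g_n\}$ is a pseudo-Riesz basis, so $\varphi\in\mathcal{M}\Phi(H_{\mathcal{A}})$. The step needing care is that the given decomposition of $\varphi$ need not be the one witnessing Fredholmness. The plan is to show directly that $N_2'=\bigl(\Theta^{*}(L_N^{\perp})\bigr)^{\perp}$ is finitely generated.
\begin{itemize}
\item[(a)] The first choice is again \cite[Lemma 3.1]{filomat}, applied to $\varphi$ with its $\mathcal{M}\Phi_{+}$-decomposition above. In this decomposition $M_1'=L_N^{\perp}$ has finitely generated complement $N_1'$, so the lemma should give that $\varphi$ is Fredholm if and only if $N_2'$ is finitely generated.
\item[(b)] If that lemma does not apply in this form, use the decomposition-independence of the $K$-theoretic index. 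Two $\mathcal{M}\Phi_{+}$-decompositions of an $\mathcal{A}$-Fredholm operator satisfy $[N_1]-[N_2]=[N_1'']-[N_2'']$ in $K_0(\mathcal{A})$, in the sense of \cite[Lemma 2.7.13]{MT}. Together with stabilization (adding finitely generated summands), this forces $N_2'$ to be a direct summand of a finitely generated module, hence finitely generated.
\end{itemize}
Once $N_2'=N_2$ is finitely generated, the displayed decomposition of $\Theta^{*}$ is an $\mathcal{M}\Phi$-decomposition. So $\Theta^{*}$ is $\mathcal{A}$-Fredholm, contradicting the assumption that $\{f_n\}$ is not a pseudo-Riesz basis. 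I expect this step to be the main obstacle.

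For (3): by (2), $\{g_n\}$ is a pseudo-Riesz basis, so both $\Theta^{*}$ and $\varphi$ are $\mathcal{A}$-Fredholm. Both displayed decompositions are then $\mathcal{M}\Phi$-decompositions, and their corresponding pieces are pairwise isomorphic. Suppose $\varphi\in\mathcal{M}\Phi_{0}(H_{\mathcal{A}})$. Following the end of the proof of Proposition \ref{perturbacija}, let $\Theta^{*}$ and $\varphi$ play the roles of $F$ and $D$ in the proof of \cite[Theorem 4.2]{filomat} (its $\mathcal{M}\Phi_{0}$ version). This transfers an $\mathcal{M}\Phi_{0}$-decomposition of $\varphi$ to one of $\Theta^{*}$. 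A more concrete route is to use the index: $\operatorname{index}\Theta^{*}=[L_N]-[N_2]=\operatorname{index}\varphi=0$, and \cite{BJMA} characterizes $\mathcal{M}\Phi_{0}$ as Fredholm operators of zero index. Either way $\Theta^{*}\in\mathcal{M}\Phi_{0}(H_{\mathcal{A}})$, so by Proposition \ref{weyl} $\{f_n\}$ is a pseudo-Riesz-Weyl basis, a contradiction. Hence $\{g_n\}$ is not a pseudo-Riesz-Weyl basis.
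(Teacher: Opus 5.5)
Your proposal is correct and follows the paper's proof essentially step by step: the paper likewise reuses the two decompositions from the proof of Proposition \ref{perturbacija}, obtains (2) and (1) from \cite[Lemma 3.1]{filomat} applied to the $\mathcal{M}\Phi_{+}$-decompositions of $\Theta^{*}$ and $\varphi$ (your option (a) is exactly the paper's argument for (1)), and obtains (3) by transferring an $\mathcal{M}\Phi_{0}$-decomposition from $\varphi$ to $\Theta^{*}$ via the proof of \cite[Theorem 4.2]{filomat}. Your fallback (b) is unnecessary and, as phrased, circular, since $[N_2']$ is a class in $K_0(\mathcal{A})$ only once $N_2'$ is already known to be finitely generated; your index shortcut for (3) (equal indices of $\Theta^{*}$ and $\varphi$, plus the fact that an index-zero $\mathcal{A}$-Fredholm operator admits an $\mathcal{M}\Phi_{0}$-decomposition) is a valid alternative to invoking \cite[Theorem 4.2]{filomat}, though the paper does not take that route.
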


\begin{proof}
	(1) Let $\varphi$ be the synthesis operator
	of $\{g_n\}_{n=1}^{\infty}$. Again, by the same arguments
	as the proof of Proposition \ref{perturbacija}, we deduce that
	$\varphi$ has the matrix
	$$
	\begin{pmatrix}
		\widetilde{\varphi}_{1} & 0 \\
		0 & \widetilde{\varphi}_{4}
	\end{pmatrix}
	$$
	
	with respect to the decomposition
	
	$$
	H_{\mathcal{A}}
	=
	L_{N}^{\perp}\tilde{\oplus}\mathcal{U}(L_{N})
	\overset{\varphi}{\longrightarrow}
	V\bigl(\Theta^{*}(L_{N}^{\perp})\bigr)
	\tilde{\oplus}
	\bigl(\Theta^{*}(L_{N}^{\perp})\bigr)^{\perp}
	=
	H_{\mathcal{A}},
	$$
	
	where $\widetilde{\varphi}_{1}$, $\mathcal{U}$, and $V$
	are isomorphisms and $\Theta^{*}$ is the synthesis
	operator of $\{f_n\}_{n=1}^{\infty}$. Moreover, by the proof of Proposition \ref{perturbacija}, there exists an isomorphism $\widetilde{\mathcal{U}} $ of $H_{\mathcal{A}} $ such that
	the decomposition
	$$
	H_{\mathcal{A}}
	=
	L_{N}^{\perp}\tilde{\oplus}\widetilde{\mathcal{U}}(L_{N})
	\overset{\Theta^{*}}{\longrightarrow}
	\Theta^{*}(L_{N}^{\perp})
	\oplus
	\bigl(\Theta^{*}(L_{N}^{\perp})\bigr)^{\perp}
	=
	H_{\mathcal{A}}
	$$
	
	is an $\mathcal{M}\Phi_{+}$-decomposition for
	$\Theta^{*}$. If $\{f_n\}_{n \in \mathbb{N}}$ is not a pseudo-Riesz-basis, then
	$\Theta^{*}$ is not $\mathcal{A}$-Fredholm, hence
	$
	\bigl(\Theta^{*}(L_{N}^{\perp})\bigr)^{\perp}
	$ cannot be finitely generated. Now suppose that $ \varphi $ is $\mathcal{A}-$Fredholm. Then there would exist an $\mathcal{M}\Phi$-decomposition  $$
	H_{\mathcal{A}}
	=
	M_{1}\tilde{\oplus}N_{1}
	\overset{\Theta^{*}}{\longrightarrow}
	M_{2}\tilde{\oplus}N_{2}
	=
	H_{\mathcal{A}}
	$$ for $ \varphi ,$ in particular $N_{2} $  would be finitely generated. Since the decomposition
	
	$$
	H_{\mathcal{A}}
	=
	L_{N}^{\perp}\tilde{\oplus}\mathcal{U}(L_{N})
	\overset{\varphi}{\longrightarrow}
	V\bigl(\Theta^{*}(L_{N}^{\perp})\bigr)
	\tilde{\oplus}
	\bigl(\Theta^{*}(L_{N}^{\perp})\bigr)^{\perp}
	=
	H_{\mathcal{A}}
	$$
	
	is an $\mathcal{M}\Phi_{+}$-decomposition for $\varphi,$  by \cite[Lemma 3.1]{filomat} it would follow that
	$
	\bigl(\Theta^{*}(L_{N}^{\perp})\bigr)^{\perp}
	$	
	must be finitely generated, which is a contradiction. Thus, $\varphi$ cannot be $\mathcal{A}$-Fredholm. Hence,
	$\{g_n\}_{n \in \mathbb{N}}$ cannot be a pseudo-Riesz-basis.
	
	(2) If $\{f_n\}_{n=1}^{\infty}$ is a pseudo-Riesz-basis, then
	$\Theta^{*}$ is $\mathcal{A}$-Fredholm. Since
	
	$$
	H_{\mathcal{A}}
	=
	L_{N}^{\perp}\tilde{\oplus}\widetilde{\mathcal{U}}(L_{N})
	\overset{\Theta^{*}}{\longrightarrow}
	\Theta^{*}(L_{N}^{\perp})
	\oplus
	\bigl(\Theta^{*}(L_{N}^{\perp})\bigr)^{\perp}
	=
	H_{\mathcal{A}}
	$$
	
	is an $\mathcal{M}\Phi_{+}$-decomposition for
	$\Theta^{*}$, by \cite[Lemma 3.1]{filomat}
	it follows that
	$
	\bigl(\Theta^{*}(L_{N}^{\perp})\bigr)^{\perp}
	$	is finitely generated. Hence, since
	
	$$
	H_{\mathcal{A}}
	=
	L_{N}^{\perp}\tilde{\oplus}\mathcal{U}(L_{N})
	\overset{\varphi}{\longrightarrow}
	V\bigl(\Theta^{*}(L_{N}^{\perp})\bigr)
	\tilde{\oplus}
	\bigl(\Theta^{*}(L_{N}^{\perp})\bigr)^{\perp}
	=
	H_{\mathcal{A}}
	$$
	
	is an $\mathcal{M}\Phi_{+}$-decomposition for
	$\varphi$ and
	$
	\bigl(\Theta^{*}(L_{N}^{\perp})\bigr)^{\perp}
	$ is finitely generated, it is actually an
	$\mathcal{M}\Phi$-decomposition for $\varphi$,
	so $\varphi$ is $\mathcal{A}$-Fredholm. Thus
	$\{g_n\}_{n \in \mathbb{N}}$ is a pseudo-Riesz-basis.	
	
	(3) If $\{f_n\}_{n \in \mathbb{N}}$ is a pseudo-Riesz-basis but
	not a pseudo-Riesz-Weyl basis, then $\Theta^{*}$ is
	$\mathcal{A}$-Fredholm, but not $\mathcal{A}$-Weyl. Then the decompositions
	
	$$
	H_{\mathcal{A}}
	=
	L_{N}^{\perp}\tilde{\oplus}\widetilde{\mathcal{U}}(L_{N})
	\overset{\Theta^{*}}{\longrightarrow}
	\Theta^{*}(L_{N}^{\perp})
	\oplus
	\bigl(\Theta^{*}(L_{N}^{\perp})\bigr)^{\perp}
	=
	H_{\mathcal{A}}
	$$
	
	and
	
	$$
	H_{\mathcal{A}}
	=
	L_{N}^{\perp}\tilde{\oplus}\mathcal{U}(L_{N})
	\overset{\varphi}{\longrightarrow}
	V\bigl(\Theta^{*}(L_{N}^{\perp})\bigr)
	\tilde{\oplus}
	\bigl(\Theta^{*}(L_{N}^{\perp})\bigr)^{\perp}
	=
	H_{\mathcal{A}}
	$$
	
	are $\mathcal{M}\Phi$-decompositions for
	$\Theta^{*}$ and $\varphi$, respectively, because again by \cite[Lemma 3.1]{filomat} we must have that
	$
	\bigl(\Theta^{*}(L_{N}^{\perp})\bigr)^{\perp}
	$
	is finitely generated in this case. 
	
	Suppose now that
	$\varphi$ is $\mathcal{A}$-Weyl. Since  $
	\widetilde{\mathcal{U}}(L_{N})
	\cong
	\mathcal{U}(L_{N})
	$
	and
	$
	V\bigl(\Theta^{*}(L_{N}^{\perp})\bigr)
	\cong
	\Theta^{*}(L_{N}^{\perp}),
	$ then, again by  letting $ \Theta^{*} $ and $\varphi$ play the role of operators $F$ and $D,$ respectively, in the proof of
	\cite[Theorem 4.2]{filomat},we would be able to construct
	an $\mathcal{M}\Phi_{0}$-decomposition for
	$\Theta^{*}$, which would be a contradiction since
	$\Theta^{*}$ is \underline{not} $\mathcal{A}$-Weyl. Thus,
	we must have that $\varphi$ is not
	$\mathcal{A}$-Weyl, hence
	$\{g_n\}_{n \in \mathbb{N}}$ cannot be a
	pseudo-Riesz-Weyl-basis. 
\end{proof}

\begin{proposition}
	Let $\{g_n\}_{n \in \mathbb{N}}$ be a Bessel sequence
	in $H_{\mathcal{A}}$ such that there exists a sequence
	$\{M_n\}_{n \in \mathbb{N}}$
	of positive numbers satisfying that
	
	$$
	\lim_{n \to \infty} M_n = 0
	$$
	
	and in addition
	
	$$
	\sum_{j=n}^{\infty}
	\langle x,g_j\rangle
	\langle g_j,x\rangle
	\leq
	M_n\langle x,x\rangle
	$$
	
	for all $x \in H_{\mathcal{A}}$ and
	$n \in \mathbb{N}$. Then the following statements hold.
	
	(1) If $\{f_n\}_{n \in \mathbb{N}}$ is a pseudo-Riesz-sequence 	in $H_{\mathcal{A}}$,
	then $\{f_n+g_n\}_{n \in \mathbb{N}}$ is a pseudo-Riesz-sequence 	in $H_{\mathcal{A}}$.
	
	(2) If $\{f_n\}_{n \in \mathbb{N}}$ is a pseudo-Riesz-basis 	in $H_{\mathcal{A}}$, then
	$\{f_n+g_n\}_{n \in \mathbb{N}}$ is a pseudo-Riesz-basis in $H_{\mathcal{A}}$.
	
	(3) If $\{f_n\}_{n \in \mathbb{N}}$ is a pseudo-Riesz-Weyl-sequence
	in $H_{\mathcal{A}}$, then
	$\{f_n+g_n\}_{n \in \mathbb{N}}$ is a pseudo-Riesz-Weyl-sequence
	in $H_{\mathcal{A}}$.
	
	(4) If $\{f_n\}_{n \in \mathbb{N}}$ is a pseudo-Riesz-Weyl-basis
	in $H_{\mathcal{A}}$, then
	$\{f_n+g_n\}_{n \in \mathbb{N}}$ is a pseudo-Riesz-Weyl-basis
	in $H_{\mathcal{A}}$.
\end{proposition}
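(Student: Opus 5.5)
The plan is to show that the synthesis operator $\psi$ of $\{g_n\}_{n\in\mathbb{N}}$ lies in $\mathcal{K}^{*}(H_{\mathcal{A}})$. Then the synthesis operator of $\{f_n+g_n\}_{n\in\mathbb{N}}$, which is $\Theta^{*}+\psi$ with $\Theta^{*}$ the synthesis operator of $\{f_n\}$, is a compact perturbation of $\Theta^{*}$. Once this is established, the four statements reduce to the stability of the relevant operator classes under compact perturbations, combined with the characterizations already proved. These are Proposition \ref{prop_pseudo2} for pseudo-Riesz sequences, the proposition characterizing pseudo-Riesz bases by $\mathcal{A}$-Fredholmness, Proposition \ref{semi-weyl} for pseudo-Riesz-Weyl sequences, and Proposition \ref{weyl} for pseudo-Riesz-Weyl bases. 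Note that $\{f_n+g_n\}$ is Bessel, since the synthesis operators add and Proposition \ref{prop_sequ}(i) applies.

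\textbf{Step 1: compactness of $\psi$.} Let $P_n$ be the orthogonal projection onto $L_{n-1}$, and set $\psi_n:=\psi P_{n}$. Then $\psi_n(c)=\sum_{j=1}^{n-1}g_j c_j=\sum_{j<n}\theta_{g_j,e_j}(c)$, so $\psi_n$ is a finite sum of elementary operators and lies in $\mathcal{K}^{*}(H_{\mathcal{A}})$. On the other hand, $\psi-\psi_n=\psi(I-P_n)$ is the synthesis operator of the Bessel sequence $\{0,\dots,0,g_n,g_{n+1},\dots\}$. By hypothesis this sequence has Bessel bound $M_n$, so Proposition \ref{prop_sequ}(i) gives $\|\psi-\psi_n\|\le\sqrt{M_n}\to 0$. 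Since $\mathcal{K}^{*}(H_{\mathcal{A}})$ is norm closed, $\psi\in\mathcal{K}^{*}(H_{\mathcal{A}})$.

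\textbf{Step 2: invariance of the classes.} By \cite[Theorem 2.3]{BJMA}, upper semi-$\mathcal{A}$-Fredholm operators correspond to left-invertible elements of the Calkin algebra $B^{a}(H_{\mathcal{A}})/\mathcal{K}^{*}(H_{\mathcal{A}})$, and $\mathcal{A}$-Fredholm operators correspond to invertible elements. Hence both classes are invariant under adding $\psi$, which gives (1) and (2). For (3) and (4) I would invoke the results of \cite{BJMA, filomat} stating that $\mathcal{M}\Phi_{+}^{-\prime}(H_{\mathcal{A}})$ and $\mathcal{M}\Phi_{0}(H_{\mathcal{A}})$ are invariant under compact perturbations. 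These are the Weyl-type analogues of the classical theorem, proved via the decomposition-transfer technique of \cite[Theorem 4.2]{filomat} that is already used above.

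\textbf{Main obstacle.} I expect the delicate step to be (3) and (4), that is, the compact-perturbation invariance of the Weyl classes, since this requires an index-type argument in $K_0(\mathcal{A})$ rather than mere Calkin invertibility. If a direct citation is not available, I would argue in two stages. First, use Step 1 to write $\psi$ as a finite-rank part $\psi_n$ plus an operator of norm $<\varepsilon$. For the small part, stability of these classes under small norm perturbations follows by the same diagonalization argument as in Proposition \ref{perturbacija} and the proof of the Weyl part of the preceding propositions. For the finite-rank part $\psi_n$, which acts through $L_{n-1}$, I would absorb $L_{n-1}$ into $N_1$ of a suitable decomposition, exactly as in the proof of Proposition \ref{prop_pseudo2} with \cite[Lemma 2.16]{BJMA}, and check that the relation $N_1\preceq N_2$ (respectively $N_1\cong N_2$) is preserved, since finitely generated summands are added to both sides.
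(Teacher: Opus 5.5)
Your proposal is correct and follows essentially the paper's route. You show that the synthesis operator $\psi$ of $\{g_n\}$ lies in $\mathcal{K}^{*}(H_{\mathcal{A}})$ using the tail bound $\|\psi(I-P_n)\|\leq\sqrt{M_n}$, and then combine the compact-perturbation invariance of $\mathcal{M}\Phi_{+}$, $\mathcal{M}\Phi$, $\mathcal{M}\Phi_{+}^{-\prime}$ and $\mathcal{M}\Phi_{0}$ (which the paper takes from \cite[Lemma 2.7.13]{MT} and \cite[Lemma 5.9]{BJMA}) with the four characterization propositions. The only differences are that you verify compactness directly via the finite sums $\sum_{j<n}\theta_{g_j,e_j}$ instead of citing \cite[Proposition 2.2.7]{MT}, and that your fallback argument for the Weyl classes is unnecessary once that lemma is cited.
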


\begin{proof}
	Notice that if $\varphi$ is the synthesis
	operator of $\{g_n\}_{n \in \mathbb{N}}$, and
	$Q_m$ denotes the orthogonal projection onto
	$L_m^{\perp}$ for each $m \in \mathbb{N}$,
	then $\varphi Q_m$ is the synthesis operator
	of the Bessel sequence
	$$
	\{
	\underbrace{0,\ldots,0}_{m\text{-times}},
	g_{m+1},g_{m+2},\ldots
	\},
	$$
	which by the assumption has (upper) bound
	$M_m$ for each $m \in \mathbb{N}$. By Proposition \ref{prop_sequ}, we then have
$$
	\|\varphi Q_m\|
	\leq
	\sqrt{M_m}
	$$
	
	for each $m \in \mathbb{N}$, hence, by the assumption
	$$
	\|\varphi Q_m\|
	\to 0
	\qquad \text{as } m\to\infty.
	$$
	
	In other words, $
	\varphi_{|_{L_m^{\perp}}}
	\to 0
	\qquad \text{as } m\to\infty.
	$ By \cite[Proposition 2.2.7]{MT} this means that $\varphi$ is compact, i.e.,
	$
	\varphi \in \mathcal{K}^{*}(H_{\mathcal{A}}).
	$ Since
	$$
	\mathcal{M}\Phi_{+}(H_{\mathcal{A}}),
	\quad
	\mathcal{M}\Phi(H_{\mathcal{A}}),
	\quad
	\mathcal{M}\Phi_{+}^{-^{\prime}}(H_{\mathcal{A}})
	\quad \text{and} \quad
	\mathcal{M}\Phi_{0}(H_{\mathcal{A}})
	$$ are invariant under compact perturbations by
	\cite[Lemma 2.7.13]{MT} and \cite[Lemma 5.9]{BJMA},
	the statements follow from Proposition \ref{semi-weyl} and Proposition \ref{weyl}.
	\end{proof}		

\textbf{Acknowledgement:} I am deeply grateful to Professor Michael Frank  for suggesting the research topic of this paper and for introducing to me the relevant literature.

\bibliographystyle{amsplain}

\end{document}